\documentclass[letterpaper, 10 pt, conference]{ieeeconf}
\IEEEoverridecommandlockouts                              
\overrideIEEEmargins

\usepackage{graphicx}
\usepackage{bm}
\usepackage{amsmath,amssymb,latexsym}
\usepackage{cite}

\usepackage{etex}
\usepackage{tikz}
\usetikzlibrary{shapes,arrows}
\usetikzlibrary{arrows}
\usetikzlibrary{decorations.pathreplacing}

\newtheorem{theorem}{Theorem}

\newtheorem{lemma}{Lemma}

\newtheorem{corollary}{Corollary}
\newtheorem{remark}{Remark}

\ifnum\pdfshellescape=1
\tikzexternalize[prefix=tikzpdf/]
\fi

\newcommand{\E}[1]{\mathbb{E}\left\{ #1\right\}}
\renewcommand{\exp}[1] {\mathrm{exp}\left( #1\right)}
\usepackage{caption}
\usepackage{subcaption}
\title{Mean Square Stabilization of Vector LTI Systems over Power Constrained Lossy Channels}

 \author{Liang Xu, Yilin Mo and Lihua Xie 
 \thanks{Liang Xu, Yilin Mo and Lihua Xie are with School of Electrical and Electronic Engineering, Nanyang Technological University, Singapore.
         Email: {\tt\small lxu006@e.ntu.edu.sg,  ylmo@ntu.edu.sg, elhxie@ntu.edu.sg}}}

\begin{document} \maketitle

\begin{abstract}
    This paper studies the mean square stabilization problem of vector
    LTI systems over power constrained lossy channels. The
    communication channel is with packet dropouts, additive noises and
    input power constraints. To overcome the difficulty of optimally
    allocating channel resources among different sub-dynamics,
    schedulers are designed with time division multiplexing of
    channels. An adaptive TDMA (Time Division Multiple Access)
    scheduler is proposed first, which is shown to be able to achieve
    a larger stabilizability region than the conventional TDMA
    scheduler, and is optimal under some special cases. In particular,
    for two-dimensional systems, an optimal scheduler is designed,
    which provides the necessary and sufficient condition for mean
    square stabilization.
\end{abstract}

\section{Introduction}

For ease of installation and maintenance, wireless communication has
potentially wide applications in control systems. Due to change of
environments, fading and additive noises are unavoidable in wireless
communications, which motivates the study on how they affect the
stability and performance of control systems.

Traditionally, fading and additive communication noises are studied
separately. For example, \cite{BraslavskyJH2007TAC,
  FreudenbergJS2010TAC} study the stabilization problem of linear
systems controlled over power constrained AWGN channels. They show
that there exists a kind of channel capacities which is related to the
unstable eigenvalues of the linear system, above which there exist no
stabilizing feedback control strategies. This is parallel to the
data-rate theorem in~\cite{NairGN2004SIAM}, which establishes a
critical data rate for a rate limited communication channel below
which the system cannot be stabilized. Similarly, for pure fading
channels,\cite{EliaN2005SCL} shows that there exists a mean square
capacity that determines the stabilizability of the open-loop system.
However, since fading and additive noises exist simultaneously in
wireless communication systems, it is practical to consider them as a
whole. Previously, we have derived necessary and sufficient
stabilizability conditions for LTI systems controlled over power
constrained fading channels~\cite{XuLiang2015CDC}. The strategies
derived there are shown to be optimal for scalar systems. While for
vector systems, generally there exists a gap between the necessary
condition and the sufficient condition.

For vector systems, the difficulty is how to optimally allocate
channel resources among different sub-systems. Similar difficulties
are also encountered in networked control over rate limited
communication channels. It is shown in~\cite{MineroP2009TAC} that the
main difficulty in stabilizing a multi-dimensional system over random
digital channels consists of allocating optimally the bits to each
unstable sub-system. They introduce a rate allocation vector which
determines the fraction of rates that is allocated to each sub-system
to solve this problem. Generally, the number of bits to each state
variable is proportional to the magnitude of the corresponding
unstable mode~\cite{YouKeyou2011TACmarkovian}. The stabilizability
region achieved by this method is a convex hull, which can be
conservative even for two-dimensional systems. This rate vector
allocation scheme for digital channels essentially implies a FDMA
(Frequency Division Multiple Access) strategy for applications to
analogy channels. However, FDMA schemes are difficult to design and
analyze. In this paper, we propose an adaptive TDMA communication
protocol, which achieves a similar effect as the rate allocation
vector used in\cite{MineroP2009TAC}\cite{YouKeyou2011TACmarkovian}.
Moreover, we show that the optimal allocation is time-varying, which
contrasts with the constant rate vector allocation. Based on this
analysis, an optimal scheduler is proposed for two-dimensional
systems, which can provide the necessary and sufficient
stabilizability condition.

This paper is organized as follows: in Section II, the problem is
formulated and preliminaries are provided. Section III illustrates the
adaptive TDMA scheduler design and its stability analysis. An optimal
scheduler is proposed and analyzed for two-dimensional systems in
Section IV.\ This paper ends with some concluding remarks in Section
V.

\section{Problem Formulations and Preliminaries}

This paper studies the following single-input discrete-time linear
system
\begin{equation}
    \label{eq.LtiDynamics}
    x_{t+1}=A x_{t}+B u_{t}
\end{equation}
where $x\in \mathbb{R}^N$ is the system state, $u \in \mathbb{R}$ is
the control input and $(A,B)$ is stabilizable. Without loss of
generality, we can assume that $A$ is in the real Jordan canonical
form and all its eigenvalues are either on or outside of the unit
disk. Let $\lambda_1, \ldots, \lambda_d$ be the distinct unstable
eigenvalues (if $\lambda_i$ is complex, we exclude its complex
conjugates $\lambda_i^*$ from this list) of $A$ with
$|\lambda_1|\ge |\lambda_2|\ge \ldots \ge |\lambda_d|$. Let $m_i$ be
the algebraic multiplicity of each $\lambda_i$. Then $A$ has the block
diagonal structure
$A=\mathrm{diag}(J_1,\ldots,J_d)\in \mathbb{R}^{N\times N}$, where the
block $J_i\in \mathbb{R}^{\mu_i\times \mu_i}$ with
\begin{equation*}
    \mu_i =\left\{ \begin{matrix}
            m_i & \mathrm{if} \;\; \lambda_i \in \mathbb{R}\\
            2m_i & \mathrm{otherwise}
        \end{matrix} \right.
\end{equation*}
The initial value $x_0=[x_{1,0}, \ldots, x_{N,0}]$ is randomly
generated from a Gaussian distribution with zero mean and bounded
covariance matrix ${ \Sigma_{x_0}}>0$. The system state $x_t$ is
observed by a sensor and then encoded and transmitted to the
controller through a power constrained lossy channel with
\begin{equation}
    \label{eq:channel}
    r_t=\gamma_t s_t+n_t
\end{equation}
where $s_t$ denotes the channel input; $r_t$ represents the channel
output; $\{\gamma_t\}$ models the i.i.d.\ packet drop process with
Bernoulli distribution $\text{Pr}(\gamma_t=0)=\epsilon$,
$\text{Pr}(\gamma_t=1)=1-\epsilon$ and $\{n_t\}$ is the additive white
Gaussian communication noise with zero-mean and bounded variance
$\sigma_n^2$. The channel input $s_t$ must satisfy an average power
constraint, i.e., $\E{s_t^2}\le P$. We also assume that
$x_0, \gamma_0, n_0, \gamma_1, n_1, \ldots$ are independent. In the
paper, it is assumed that after each transmission, the instantaneous
value of $\gamma_t$ is known to the decoder, which is reasonable for
slow-varying channels with channel
estimation~\cite{GoldsmithA2005Book}. Besides, there exists a feedback
link that communicates $r_{t-1}$ and $\gamma_{t-1}$ from the channel
output to the channel input. The feedback configuration among the
plant, the sensor and the controller, and the channel encoder/decoder
structure are depicted in Fig~\ref{fig.NCS}.

\begin{figure}[htpb]
    \centering
    \includegraphics[width=0.4\textwidth]{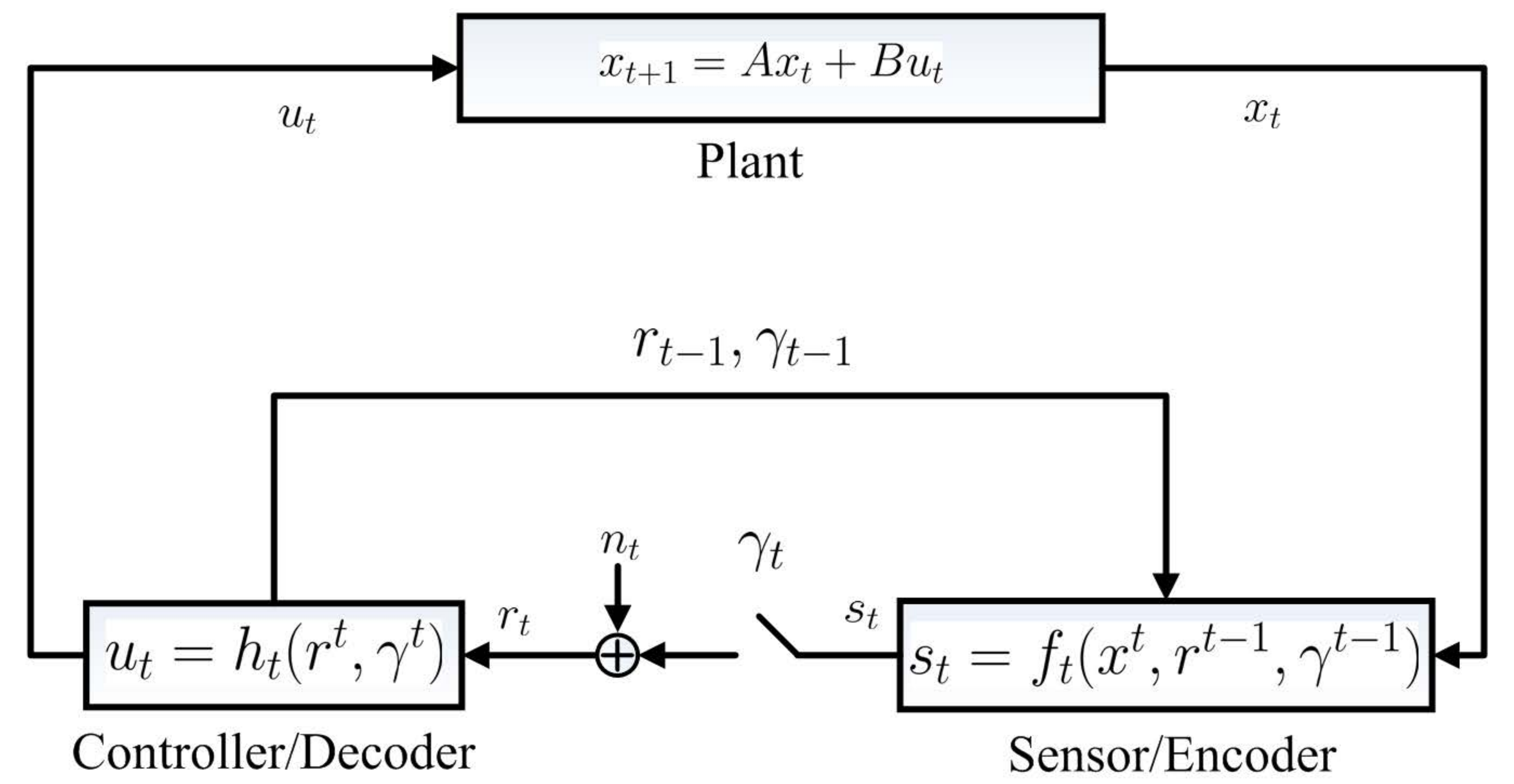}\\
    \caption{Network control structure over a power constrained lossy
      channel}
    \label{fig.NCS}
\end{figure}
In this paper, we try to find conditions on the
channel~\eqref{eq:channel} such that there exists a pair of
encoder/decoder $\{f_t\}, \{h_t\}$ that can mean square stabilize the
LTI dynamics~\eqref{eq.LtiDynamics}, i.e., to render
$\mathrm{lim}_{t \rightarrow \infty} \E{x_tx_t'}=0$. If we define
$\delta= \frac{\sigma_n^2}{\sigma_n^2+P}$, the necessary condition and
the sufficient condition to ensure mean square stabilizability
in~\cite{XuLiang2015CDC} are first recalled in the lemma below.
\begin{lemma}
    There exists an encoder/decoder pair $\{f_t\}, \{h_t\}$, such that
    the LTI dynamics~\eqref{eq.LtiDynamics} can be stabilized over the
    communication channel~\eqref{eq:channel} in mean square sense if
    \begin{equation}
        \label{eq:TdmaSufficiency}
        \sum _{i=1}^d \mu_i\mathrm{ln} |\lambda_i| < - \frac{1}{2} \mathrm{ln}(\epsilon+(1-\epsilon) \delta)
    \end{equation}
    and only if
    $(\mathrm{ln}|\lambda_1|, \ldots, \mathrm{ln}|\lambda_d|) \in
    \mathbb{R}^{d}$
    satisfy that for all $v_i \in \{0, \ldots, m_i\}$ and
    $i\in \mathcal{U} = \{1, \ldots, d\}$
    \begin{equation}
        \label{eq.Necessity}
        \sum_{i \in \mathcal{U}} a_i v_i \mathrm{ln}|\lambda_i|< - \frac{v}{2} \mathrm{ln}  (\epsilon+(1-\epsilon) \delta^{\frac{1}{v}})
    \end{equation}
    where $v=\sum_{i\in \mathcal{U}} a_iv_i$, and $a_i=1$ if
    $\lambda_i \in \mathbb{R}$, and $a_i=2$ otherwise.
\end{lemma}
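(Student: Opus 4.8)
The plan is to prove the two implications separately. For the sufficient direction I would exhibit an explicit scheme: an analog (linear) encoder that transmits a scaled one-step prediction error of the sub-states subject to $\E{s_t^2}\le P$, a decoder running the matching innovations recursion (using that $\gamma_t$ is revealed to it), and a certainty-equivalence controller, so that mean-square stabilization reduces to driving the state estimation error covariance to zero. The single scalar channel is time-shared, each scalar sub-state being granted a fixed fraction of the channel uses over long frames. When a sub-state with eigenvalue magnitude $|\lambda_i|$ is served, its error variance is multiplied by $\delta$ if the packet is received and by $1$ if it is dropped, i.e. by $\epsilon+(1-\epsilon)\delta$ in expectation; amortizing the growth $|\lambda_i|^2$ per step against one such contraction every $1/p$ steps yields a per-step factor $|\lambda_i|^{2}(\epsilon+(1-\epsilon)\delta)^{p}$, so that stability of that sub-state reduces to the scalar condition $\ln|\lambda_i|<pC$, where $C:=-\tfrac12\ln(\epsilon+(1-\epsilon)\delta)>0$. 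The decisive observation is that the minimal fractions $p=\ln|\lambda_i|/C$, summed over all $\sum_i\mu_i$ unstable scalar sub-states, total less than one exactly when $\sum_i\mu_i\ln|\lambda_i|<C$, i.e. precisely~\eqref{eq:TdmaSufficiency}; hence a feasible allocation exists under that hypothesis.

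For the necessary direction I would fix any subset of modes and any $v_i\in\{0,\dots,m_i\}$ and analyze the $v$-dimensional subsystem they span, $v=\sum_i a_i v_i$. The argument lower-bounds the estimation error covariance $\Sigma_t$, which any stabilizing scheme must drive to zero on the unstable subspace. Two facts drive it. First, the subsystem dynamics inflate $\det\Sigma_t$ by $G^2$ per step, where $\ln G=\sum_{i\in\mathcal{U}}a_iv_i\ln|\lambda_i|$ is the log-volume growth. Second, a single use of channel~\eqref{eq:channel} conveys at most one real number under the power constraint, so it can reduce the log-determinant of the conditional error covariance by at most $-\ln\delta$ when $\gamma_t=1$ and by $0$ when $\gamma_t=0$; hence pathwise $\det\Sigma_{t+1}\ge G^2\rho_t\det\Sigma_t$ with $\rho_t=\delta$ (received) or $\rho_t=1$ (dropped). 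The key step is to pass to $W_t=(\det\Sigma_t)^{1/v}$: since the drops $\{\gamma_t\}$ are i.i.d.\ and independent of the past, $\E{\rho_t^{1/v}}=\epsilon+(1-\epsilon)\delta^{1/v}$, giving $\E{W_t}\ge [G^{2/v}(\epsilon+(1-\epsilon)\delta^{1/v})]^{t}(\det\Sigma_0)^{1/v}$ with $\det\Sigma_0>0$ since $\Sigma_{x_0}>0$. By AM--GM, $\mathrm{tr}\,\Sigma_t\ge v\,W_t$, so mean-square stability forces $\E{W_t}$ to stay bounded, which requires $G^{2/v}(\epsilon+(1-\epsilon)\delta^{1/v})\le 1$; taking logarithms yields exactly~\eqref{eq.Necessity}. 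Note that it is raising to the $1/v$ power \emph{before} taking the expectation over the Bernoulli drop that converts $\delta$ into $\delta^{1/v}$, which is what makes the necessary condition strictly tighter than the volume condition when $v>1$.

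The main obstacle is the second fact in the necessity argument: bounding the per-use reduction of the conditional log-volume for arbitrary, possibly nonlinear and history-dependent, encoders under the \emph{average} power constraint rather than an instantaneous one. This needs an entropy / mutual-information argument (the Gaussian maximizes differential entropy for a given covariance, the data-processing inequality controls what $r_t$ reveals, and $\E{s_t^2}\le P$ bounds the per-use mutual information by $\tfrac12\ln(1+P/\sigma_n^2)=-\tfrac12\ln\delta$), together with a time-averaging (Jensen) step to absorb power that is spread unevenly across time. A secondary difficulty is the reduction to subsystems, where one must argue that the remaining modes can be supplied as genie side information without loosening the bound. On the sufficiency side, the routine but nontrivial bookkeeping is realizing possibly irrational fractions $p$ through sufficiently long frames and verifying the covariance contraction inside each complex or non-diagonalizable Jordan block.
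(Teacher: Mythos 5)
A preliminary remark: the paper does not prove this lemma at all --- it is recalled verbatim from \cite{XuLiang2015CDC} --- so the comparison is against the argument of that reference, which your plan in fact mirrors. Your sufficiency half is the same fixed-fraction TDMA construction: a scalar sub-state served a fraction $p$ of the channel uses contracts on average by $\epsilon+(1-\epsilon)\delta$ per served slot, its stability reduces to $\ln|\lambda_i|<pC$ with $C=-\tfrac12\ln(\epsilon+(1-\epsilon)\delta)$, and a feasible allocation of fractions exists precisely under \eqref{eq:TdmaSufficiency}; this is exactly the scheme the paper attributes to the TDMA scheduler (compare the reformulation in Remark~\ref{remark:adaptiveLargerThanConventional}). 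Your necessity half also aims at the right object, and the point you emphasize --- that the $1/v$-th power must be applied pathwise in the drop realization, \emph{before} averaging over the Bernoulli process, which is what produces $\delta^{1/v}$ --- is the crux of the known proof.

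The genuine gap is the step you yourself flag as the main obstacle, and it is not merely a technicality left to fill in: the pathwise inequality $\det\Sigma_{t+1}\ge G^2\rho_t\det\Sigma_t$ for the conditional error covariance is false in general for nonlinear, history-dependent encoders. Conditioning on a favorable channel-output realization can shrink $\det\Sigma$ by much more than the factor $\delta$ (the capacity bound only constrains an average), and even in expectation the mutual-information bound does not control $\det\Sigma$, because log-determinant of a covariance equals differential entropy only for Gaussian laws, and after one nonlinear transmission the conditional law is no longer Gaussian. The repair is structural rather than cosmetic: the recursion must be run on the conditional entropy power $N_t=\frac{1}{2\pi e}\,e^{2h(x_t\mid r^t,\gamma^t)/v}$ for each fixed drop sequence, for which data processing and the channel capacity bound give $N_{t+1}\ge G^{2/v}\delta^{\gamma_{t+1}/v}N_t$ pathwise in $\gamma$; the covariance only enters at the very end through the maximum-entropy and Jensen inequalities, $\mathbb{E}\{\|x_t-\hat x_t\|^2\mid \gamma^t\}\ge v\,N_t(\gamma^t)$, after which averaging over the i.i.d.\ drops yields $\mathbb{E}\{N_{t+1}\}\ge G^{2/v}\bigl(\epsilon+(1-\epsilon)\delta^{1/v}\bigr)\mathbb{E}\{N_t\}$, i.e.\ your intended recursion. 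You list these ingredients in your closing paragraph, so the idea is present, but the argument as written rests on an object ($\det\Sigma_t$) for which the key inequality does not hold. Two smaller points: boundedness of $\mathbb{E}\{W_t\}$ gives only the non-strict inequality, and strictness in \eqref{eq.Necessity} must come from requiring $\mathbb{E}\{x_tx_t'\}\to 0$, which rules out a growth factor equal to one; and since the paper's constraint $\mathbb{E}\{s_t^2\}\le P$ is imposed per channel use, the time-averaging step you anticipate for power spread unevenly across time is not actually needed.
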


The sufficient condition~\eqref{eq:TdmaSufficiency} is achieved by
using a TDMA strategy, where each sub-dynamics is allocated a fixed
period to use the channel. In the following section, we propose an
adaptive TDMA communication scheme for $N$-dimensional systems which
achieves a less conservative result than~\eqref{eq:TdmaSufficiency}.

\section{Adaptive TDMA Scheme for $N$-dimensional Systems}

Before stating the communication scheme, the following lemma is listed
first, which is instrumental to the protocol design.
\begin{lemma}[\cite{KumarU2014TAC}]
    \label{lemma:estimationThenControl}
    If there exists an estimation scheme $\hat{x}_t$ for the initial
    system state $x_0$, such that the estimation error
    $e_t=\hat{x}_t-x_0= [e_{1,t}, e_{2,t},\ldots, e_{N,t}]$ satisfies
    the following property,
    \begin{eqnarray}
      \label{eq:estimationThenControlRequirement}
      \E{ e_t }=0 \label{eq:estimationThenControlRequirement1} \\
      \lim_{t\rightarrow \infty} A^t \E{ e_t e_t' } (A')^t=0  \label{eq:estimationThenControlRequirement2}
    \end{eqnarray}
    the system~\eqref{eq.LtiDynamics} can be mean square stabilized by
    the controller
    \begin{equation}
        \label{eq:controller}
        u_t=K\left( A^t \hat{x}_t+ \sum_{i=1}^t A^{t-i} Bu_{i-1} \right)
    \end{equation}
    with $K$ being selected such that $A+BK$ is stable.
\end{lemma}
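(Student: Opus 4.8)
The plan is to reduce the closed loop to a Schur stable recursion forced by a vanishing disturbance, and then to bound the second moment in the $L^2$ norm so as to bypass the unknown temporal correlations of the estimation error. First I would substitute $\hat{x}_t = x_0 + e_t$ into the control law~\eqref{eq:controller}. Comparing
\[ A^t \hat{x}_t + \sum_{i=1}^t A^{t-i} B u_{i-1} = A^t x_0 + \sum_{i=1}^t A^{t-i} B u_{i-1} + A^t e_t \]
with the explicit solution $x_t = A^t x_0 + \sum_{i=1}^t A^{t-i} B u_{i-1}$ of~\eqref{eq.LtiDynamics}, the control-dependent summation cancels exactly, leaving $u_t = K(x_t + A^t e_t)$. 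Substituting this back into~\eqref{eq.LtiDynamics} produces the closed-loop recursion
\[ x_{t+1} = (A+BK)\, x_t + B K A^t e_t, \]
a Schur stable system (since $K$ is chosen so that $A+BK$ is stable) driven by the term $BK A^t e_t$ whose second moment is controlled precisely by hypothesis~\eqref{eq:estimationThenControlRequirement2}.

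Next I would unroll this recursion to obtain
\[ x_t = (A+BK)^t x_0 + \sum_{k=0}^{t-1} (A+BK)^{t-1-k} B K A^k e_k. \]
Propagating the covariance directly is awkward, since it would require controlling the cross moments $\E{e_j e_k'}$ for $j \neq k$, about which nothing is assumed. Instead I would work through the trace: because $\E{x_t x_t'}$ is positive semidefinite, $\E{x_t x_t'} \to 0$ is equivalent to $\mathrm{tr}\,\E{x_t x_t'} = \E{\normtwo{x_t}^2} \to 0$, and the map $X \mapsto \sqrt{\E{\normtwo{X}^2}}$ is a genuine norm, so Minkowski's inequality applies termwise to the sum above irrespective of any correlations. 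Writing $c_k = \sqrt{\mathrm{tr}\big(A^k \E{e_k e_k'} (A')^k\big)}$, so that $c_k \to 0$ by~\eqref{eq:estimationThenControlRequirement2}, and using Schur stability to fix constants $M \ge 1$ and $\rho \in (0,1)$ with $\|(A+BK)^j\| \le M \rho^j$, this yields
\[ \sqrt{\E{\normtwo{x_t}^2}} \le M \rho^t \sqrt{\E{\normtwo{x_0}^2}} + M \|BK\| \sum_{k=0}^{t-1} \rho^{t-1-k} c_k. \]

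The first term decays geometrically. The remaining term is the convolution of the summable geometric kernel $\rho^{t-1-k}$ against the sequence $c_k \to 0$, which itself tends to zero; I would prove this by splitting the sum at a large fixed index $K_0$, bounding the early block by the residual geometric factor (which vanishes as $t \to \infty$) and the tail block by $\frac{1}{1-\rho}\sup_{k \ge K_0} c_k$, then letting $t \to \infty$ and $K_0 \to \infty$. This gives $\E{\normtwo{x_t}^2} \to 0$ and hence $\E{x_t x_t'} \to 0$. The first-moment hypothesis~\eqref{eq:estimationThenControlRequirement1} makes the driving term $BK A^t e_t$ zero-mean, so the same contraction also forces $\E{x_t} \to 0$, and the $L^2$ bound above in fact unifies both moments. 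I expect the only delicate point to be the convolution limit: one must verify that the possibly non-monotone and only asymptotically small $c_k$ still produces a vanishing convolution against the geometric kernel, which the split-sum estimate settles cleanly.
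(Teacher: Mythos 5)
Your proof is correct. Note that the paper itself does not prove this lemma at all --- it is imported verbatim from \cite{KumarU2014TAC} --- so there is no in-paper argument to compare against; your proposal is effectively a self-contained reconstruction of the cited result. The reduction you perform is exactly the one the lemma is designed for: since $x_t = A^t x_0 + \sum_{i=1}^t A^{t-i}Bu_{i-1}$, the control-dependent sum in~\eqref{eq:controller} cancels and $u_t = K(x_t + A^t e_t)$, giving the Schur-stable closed loop $x_{t+1} = (A+BK)x_t + BKA^te_t$ driven by a forcing term whose second moment vanishes by~\eqref{eq:estimationThenControlRequirement2}. The one place where a blind attempt could stumble is the covariance of the unrolled sum, because nothing is assumed about the cross-correlations $\E{e_je_k'}$, and a naive expansion of $\E{x_tx_t'}$ would leave those terms dangling; your move to the scalar quantity $\E{\normtwo{x_t}^2}$ and the use of Minkowski's inequality in $L^2$ disposes of this cleanly, since the triangle inequality needs no independence or orthogonality whatsoever. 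The split-sum argument for the convolution $\sum_{k=0}^{t-1}\rho^{t-1-k}c_k \to 0$ with $c_k \to 0$ is standard and handled correctly (boundedness of $c_k$ follows from its convergence), and the equivalence $\E{\normtwo{x_t}^2}\to 0 \Leftrightarrow \E{x_tx_t'}\to 0$ holds because the matrices are positive semidefinite. The only genuinely dispensable step is your remark on the first moment: the definition of mean square stabilization used in the paper, $\lim_{t\to\infty}\E{x_tx_t'}=0$, is already settled by the $L^2$ bound, so~\eqref{eq:estimationThenControlRequirement1} is not actually needed in your chain of estimates.
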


\subsection{Encoder and Decoder Design}

In light of Lemma~\ref{lemma:estimationThenControl}, we only need to
design a communication protocol to
guarantee~\eqref{eq:estimationThenControlRequirement1}
and~\eqref{eq:estimationThenControlRequirement2}. The transmission
protocol used in this paper contains three parts: the encoder, the
decoder and the scheduler. The structure of the transmission protocol
is illustrated in Fig.~\ref{fig:scheduler}.
\begin{figure}[htpb]
    \centering
    \includegraphics[width=0.5\textwidth]{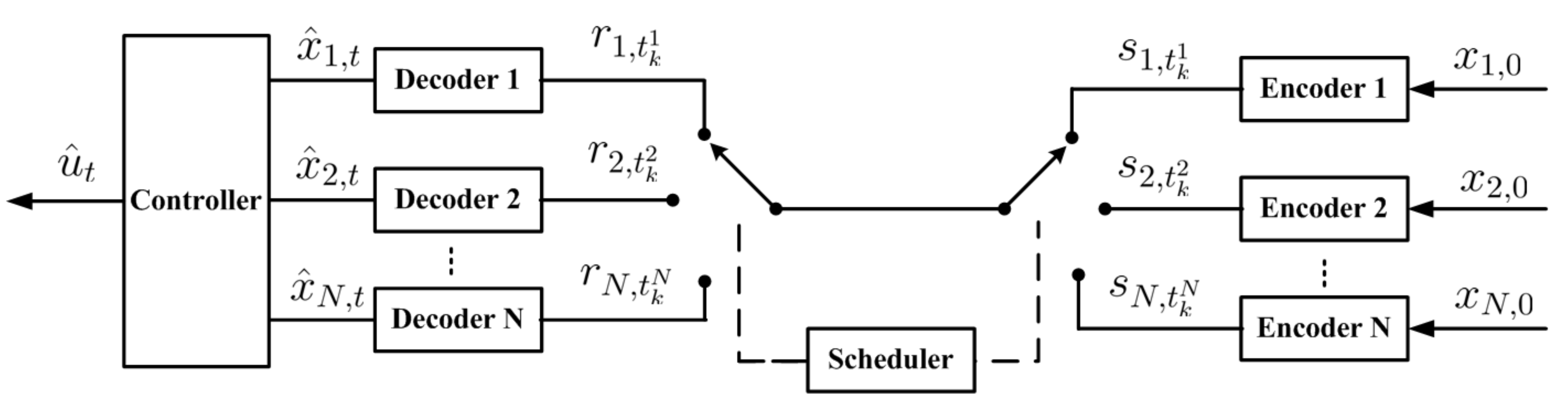}\\
    \caption{Transmission protocol configuration}
    \label{fig:scheduler}
\end{figure}

The $i$-th encoder/decoder pair is designed to transmit the
information corresponding to $x_{i,0}$. The controller maintains an
array
$\hat{x}_t=[\hat{x}_{1,t}, \hat{x}_{2,t}, \ldots, \hat{x}_{N,t}]$ that
represents the most recent estimation of $x_0$, which is set to $0$
for $t=0$. When the information about $x_{i,0}$ is transmitted, only
$\hat{x}_{i, t}$ is updated at the controller side. There is one
scheduler that determines which encoder/decoder pair should use the
channel. Denote $t^i_k$ to be the time when the $i$-th encoder/decoder
pair is scheduled to use the channel for its $k$-th transmission.
$t^i_k$ is thus updated only at the scheduled time.

The encoder $i$ is designed as
\begin{equation}
    \label{eq:encoder}
    \begin{aligned}
        s_{i, t^i_0}  &=\sqrt{\frac{P}{\sigma_{x_{i,0}}^2}} x_{i,0}\\
        s_{i,t^i_k}&=\sqrt{\frac{P}{\sigma^2_{e_{i, t^i_{k-1}}}}}\left( \hat{x}_{i, t^{i}_{k-1}} - x_{i,0} \right), \;\; k\ge 1\\
    \end{aligned}
\end{equation}
where $ \hat{x}_{i,t^i_{k-1}}$ denotes the estimate of $x_{i,0}$ at
the time $t^i_{k-1}$.

The decoder $i$ satisfies
\begin{equation}
    \label{eq:decoder}
    \begin{aligned}
        \hat{x}_{i, t_0^i} & =\sqrt{\frac{\sigma_{x_{i,0}}^2}{P}}r_{i, t_0^i}\\
        \hat{x}_{i, t_k^i}&=\hat{x}_{i, t^i_{k-1}}-\frac{\E{r_{i,
              t^i_k} e_{i, t^i_{k-1}}|\gamma_{ t_k^i}}}{\E{r_{i,
              t_k^i}^2|\gamma_{t_k^i}}}r_{i, t_k^i}, \;\; k\ge 1
    \end{aligned}
\end{equation}
with $\sigma^2_{e_{i, t}}$ representing the variance of $e_{i, t}$.

Similar to the analysis in~\cite{XuLiang2015CDC}, we can show that
under the encoder~\eqref{eq:encoder}, and the
decoder~\eqref{eq:decoder},~\eqref{eq:estimationThenControlRequirement1}
always holds and
$\E{e_{i,t}^2}=\E{\delta^{n_i^t}} \E{ e_{i, t_0^i}^2}$ with $n_i^t$
denoting the total number of successful packet receptions by the
$i$-th decoder by time $t$, which is determined both by the scheduler
and the stochastic packet drop process. Thus to
guarantee~\eqref{eq:estimationThenControlRequirement2}, generally we
should design schedulers to ensure
$\lim_{t\rightarrow \infty} \E{ \lambda_i^{2t}\delta^{n_i^t}}=0$ for
all $i=1, \ldots, N$. In the following, an adaptive TDMA scheduler is
designed and its stability property is proved.

\subsection{Scheduler Design}

Different from the fixed period transmission in the TDMA scheduler
used in~\cite{XuLiang2015CDC}, the adaptive TDMA scheduler used here
is adapted to the packet drop process. It switches the transmission
only if the packet is received for certain times. By using information
of the packet drop process, we may expect to achieve a larger
stabilizability region. The scheduler is described as below.
\begin{center}
    \begin{tabular}{ p{0.45\textwidth} }
      \hline
      \textbf{Algorithm 1}: Adaptive TDMA Scheduler for $N$-dimensional Systems\\
      \hline
      \begin{itemize}
        \item The first encoder/decoder pair is scheduled to used the
          channel, until the transmissions succeed for $n_1$ times.
        \item The second encoder/decoder pair is scheduled to use the
          channel, until the transmissions succeed for $n_2$ times.
        \item \ldots
        \item The $N$-th encoder/decoder pair is scheduled to use the
          channel, until the transmissions succeed for $n_N$ times.
        \item Repeat.
      \end{itemize}\\
      \hline
    \end{tabular}
\end{center}
The transmission scheduling is depicted in
Fig.~\ref{fig.adaptiveTDMAScheduler}, in which $T_{k}^i$ denotes the
time period for the $i$-th encoder/decoder pair to achieve $n_i$
successful transmissions during the $k$-th round; $T_k^t$ denotes the
total time period to complete the $k$-th round transmission, i.e.
$T_k^t=\sum_{i=1}^NT_k^i$. It is clear that $T_k^i$ is independent
with $T_k^j$, and $T_i^t$ is independent with $T_j^t$ for any
$i, j, k$.
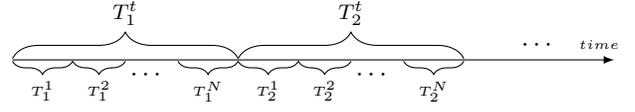
\begin{figure}
    \begin{center}
        \begin{tikzpicture}[scale=1, >=latex]
            \draw[->] (0,0) -- (8, 0); \node at (7.8, 0.2) {\tiny
              ${time}$}; \draw
            [decorate,decoration={brace,amplitude=10pt},xshift=0pt,yshift=0.5pt]
            (0,0) -- (3,0) node [black,midway,yshift=0.6cm]
            {\footnotesize $T_1^t$}; \draw
            [decorate,decoration={brace,amplitude=10pt},xshift=0pt,yshift=0.5pt]
            (3,0) -- (6,0) node [black,midway,yshift=0.6cm]
            {\footnotesize $T_2^t$}; \draw
            [decorate,decoration={brace,amplitude=5pt,mirror,raise=0pt},yshift=-0.5pt]
            (0,0) -- (0.8,0) node [black,midway,yshift=-0.4cm] {\tiny
              $ T^1_1$}; \draw
            [decorate,decoration={brace,amplitude=5pt,mirror,raise=0pt},yshift=-0.5pt]
            (0.8,0) -- (1.5,0) node [black,midway,yshift=-0.4cm]
            {\tiny $T^2_1$}; \node at (1.8, - 0.2) {$\cdots$}; \draw
            [decorate,decoration={brace,amplitude=5pt,mirror,raise=0pt},yshift=-0.5pt]
            (2.2,0) -- (3,0) node [black,midway,yshift=-0.4cm] {\tiny
              $T^N_1$}; \draw
            [decorate,decoration={brace,amplitude=5pt,mirror,raise=0pt},yshift=-0.5pt]
            (3,0) -- (3.8,0) node [black,midway,yshift=-0.4cm] {\tiny
              $ T^1_2$}; \draw
            [decorate,decoration={brace,amplitude=5pt,mirror,raise=0pt},yshift=-0.5pt]
            (3.8,0) -- (4.5,0) node [black,midway,yshift=-0.4cm]
            {\tiny $T^2_2$}; \node at (4.8, - 0.2) {$\cdots$}; \draw
            [decorate,decoration={brace,amplitude=5pt,mirror,raise=0pt},yshift=-0.5pt]
            (5.2,0) -- (6,0) node [black,midway,yshift=-0.4cm] {\tiny
              $T^N_2$}; \node at (7, 0.2) {$\cdots$};
        \end{tikzpicture}
    \end{center}
    \caption{Transmissions with the Adaptive TDMA scheduler}
    \label{fig.adaptiveTDMAScheduler}
\end{figure}

\begin{remark}
    Here we assume the encoder and the decoder are both aware of the
    scheduling algorithm. Since the switching among transmissions in
    our designed schedulers relies on the packet drop process, and
    there exists a feedback channel that acknowledges the packet drop,
    the encoder and the decoder are both aware of when to switch
    transmissions and what is the encoder/decoder pair that
    corresponds to the current channel use. This implies an implicit
    consensus among the encoder and the decoder. Thus we do not need
    to consider the coordination problem between the encoders and the
    decoders.
\end{remark}

\subsection{Stability Results}
Before stating the result, the following lemma is needed.
\begin{lemma}[The Binomial Theorem]
    \label{lemma.BinomialTheorem}
    \begin{gather*}
        \sum_{k=0}^t \binom{t}{k}x^k y^{t-k}=(x+y)^t\\
        \sum_{k=0}^{\infty} \binom{t+k-1}{t-1}x^k=\frac{1}{(1-x)^t},
        \quad (|x|<1)
    \end{gather*}
\end{lemma}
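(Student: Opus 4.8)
The plan is to handle the two identities separately, since the first is a finite polynomial identity while the second is a power-series identity valid only on $|x|<1$, and they call for slightly different tools.

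For the first identity I would argue by induction on $t$. The base case $t=0$ reads $1=1$. For the inductive step I would write $(x+y)^t=(x+y)(x+y)^{t-1}$, substitute the inductive hypothesis for $(x+y)^{t-1}$, distribute $x$ and $y$ across the sum, shift the summation index in one of the two resulting sums so that both are indexed by the same monomial $x^k y^{t-k}$, and then collect coefficients. The coefficient of $x^k y^{t-k}$ that appears is $\binom{t-1}{k-1}+\binom{t-1}{k}$, which equals $\binom{t}{k}$ by Pascal's rule, closing the induction. A purely combinatorial alternative — expanding the product of $t$ copies of $(x+y)$ and counting the subsets of factors contributing an $x$ — yields the identity directly and could be noted in passing.

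For the second identity I would induct on $t\ge 1$. The base case $t=1$ is the geometric series $\sum_{k=0}^{\infty}x^k=(1-x)^{-1}$, valid for $|x|<1$, since $\binom{k}{0}=1$. For the inductive step, assume $\sum_{k=0}^{\infty}\binom{t+k-2}{t-2}x^k=(1-x)^{-(t-1)}$. Because a power series may be differentiated term-by-term inside its open disk of convergence, I would differentiate both sides: the right-hand side becomes $(t-1)(1-x)^{-t}$, while the left-hand side becomes $\sum_{k\ge 1}k\binom{t+k-2}{t-2}x^{k-1}$. After shifting the index and dividing by $t-1$, the coefficient bookkeeping reduces to the elementary factorial identity $\frac{k+1}{t-1}\binom{t+k-1}{t-2}=\binom{t+k-1}{t-1}$, which delivers exactly the claimed expansion of $(1-x)^{-t}$. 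A Cauchy-product alternative — multiplying the exponent-$(t-1)$ series by the geometric series and invoking the hockey-stick identity $\sum_{j=0}^{k}\binom{t+j-2}{t-2}=\binom{t+k-1}{t-1}$ — reaches the same conclusion.

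These identities are classical, so there is no deep obstacle; the only points demanding care are analytic and combinatorial bookkeeping. Concretely, for the second identity I must justify term-by-term differentiation, which is legitimate because the series defines an analytic function on $|x|<1$ whose radius of convergence is preserved under differentiation, and I must verify the factorial rearrangement (equivalently Pascal's rule and the hockey-stick identity) that realigns the shifted indices with $\binom{t+k-1}{t-1}$. These are the steps I would write out in full, the remainder being routine.
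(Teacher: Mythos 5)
Your proof is correct: both inductions go through, the Pascal's-rule step $\binom{t-1}{k-1}+\binom{t-1}{k}=\binom{t}{k}$ and the factorial identity $\frac{k+1}{t-1}\binom{t+k-1}{t-2}=\binom{t+k-1}{t-1}$ check out, and term-by-term differentiation is legitimately justified inside the disk of convergence $|x|<1$. For comparison, note that the paper offers no proof of this lemma at all --- it is stated as the classical binomial theorem together with the negative binomial series and used as a known tool in proving Theorem 1 --- so any complete standard argument such as yours is adequate.
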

\begin{theorem}
    If there exist $\alpha_i>0$ with $\sum_{i=1}^d\alpha_i=1$, such
    that
    \begin{equation}
        \label{eq:AdaptiveTDMASufficiency}
        \ln |\lambda_i| < -\frac{1}{2} \ln \left( \epsilon+(1-\epsilon)\delta^{\frac{\alpha_i}{\mu_i}} \right)
    \end{equation}
    for all $i=1,\ldots, d$, the LTI dynamics~\eqref{eq.LtiDynamics}
    can be stabilized over the communication
    channel~\eqref{eq:channel} in mean square sense with the
    encoder~\eqref{eq:encoder}, the decoder~\eqref{eq:decoder} and the
    scheduler described in Algorithm 1.
\end{theorem}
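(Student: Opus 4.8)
The plan is to invoke Lemma~\ref{lemma:estimationThenControl}, which reduces mean square stabilization to verifying that the estimator defined by~\eqref{eq:encoder} and~\eqref{eq:decoder} satisfies $\E{e_t}=0$ and $\lim_{t\to\infty}A^t\E{e_te_t'}(A')^t=0$. The first identity is already guaranteed by the encoder/decoder construction, so the whole burden is the covariance limit. Since $A=\mathrm{diag}(J_1,\ldots,J_d)$ is block diagonal, this limit decouples across blocks, and for the block $J_i$ one has the standard Jordan bound $\normtwo{J_i^t}\le c\,t^{\mu_i-1}|\lambda_i|^t$ (with a smaller power for complex blocks). Bounding the off-diagonal entries of the per-block error covariance by Cauchy--Schwarz and using the stated relation $\E{e_{j,t}^2}=\E{\delta^{n_j^t}}\E{e_{j,t_0^j}^2}$, I would reduce the statement to showing, for every coordinate $j$ belonging to block $i$,
\begin{equation*}
  t^{2(\mu_i-1)}|\lambda_i|^{2t}\,\E{\delta^{n_j^t}}\to 0 .
\end{equation*}

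The second step is to realize the prescribed allocation through the scheduler. Given $\alpha_i>0$ with $\sum_i\alpha_i=1$ satisfying~\eqref{eq:AdaptiveTDMASufficiency}, I would assign to each of the $\mu_i$ encoder/decoder pairs carrying a coordinate of block $i$ a common per-round success target $\ell_i$, and set $S=\sum_{i=1}^d\mu_i\ell_i$. By continuity and strictness of~\eqref{eq:AdaptiveTDMASufficiency}, the reals $\alpha_i$ may be replaced by nearby rationals for which the inequalities still hold; clearing denominators then yields positive integers $\ell_i$ with $\ell_i/S=\alpha_i/\mu_i$ up to that harmless perturbation. The crux is the following pathwise observation: at every time step exactly one pair occupies the channel, so the success indicators form an i.i.d.\ Bernoulli$(1-\epsilon)$ sequence independent of the scheduling decisions, whence the total number of successful receptions by time $t$ is $N_t\sim\mathrm{Bin}(t,1-\epsilon)$. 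The ``switch after $\ell_i$ successes'' rule of Algorithm~1 forces, surely, $n_j^t\ge \ell_i\lfloor N_t/S\rfloor\ge \frac{\ell_i}{S}N_t-\ell_i$, so that, since $\delta<1$,
\begin{equation*}
  \E{\delta^{n_j^t}}\le \delta^{-\ell_i}\,\E{\big(\delta^{\ell_i/S}\big)^{N_t}}=\delta^{-\ell_i}\big(\epsilon+(1-\epsilon)\delta^{\ell_i/S}\big)^t ,
\end{equation*}
where the last equality is the binomial probability generating function, i.e.\ the first identity of Lemma~\ref{lemma.BinomialTheorem}. (An alternative round-by-round estimate, tracking the growth over one full transmission cycle, would instead invoke the second, negative-binomial, identity of that lemma.)

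Combining the two steps gives
\begin{equation*}
  t^{2(\mu_i-1)}|\lambda_i|^{2t}\E{\delta^{n_j^t}}\le \delta^{-\ell_i}\,t^{2(\mu_i-1)}\big[\,|\lambda_i|^2\big(\epsilon+(1-\epsilon)\delta^{\ell_i/S}\big)\big]^t ,
\end{equation*}
and since $\ell_i/S=\alpha_i/\mu_i$, hypothesis~\eqref{eq:AdaptiveTDMASufficiency} makes the bracketed base strictly less than $1$; the geometric decay dominates the polynomial prefactor and the right-hand side tends to $0$. Assembling the blocks yields $A^t\E{e_te_t'}(A')^t\to 0$, and Lemma~\ref{lemma:estimationThenControl} finishes the argument. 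I expect the main obstacle to be the second step: recognizing that, although which pair transmits is history-dependent, the aggregate success count $N_t$ is an ordinary binomial independent of the schedule, and then converting the scheduler's success-counting rule into the surely-valid lower bound on $n_j^t$ --- this decoupling is exactly what produces the exponent $\alpha_i/\mu_i$ and the clean form~\eqref{eq:AdaptiveTDMASufficiency}. Turning the real allocation $\alpha_i$ into admissible integer targets $\ell_i$ by rational approximation is a secondary point that the strictness of the inequality renders harmless.
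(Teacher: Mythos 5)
Your proof is correct, but it takes a genuinely different route from the paper's. The paper restricts attention to real, simple eigenvalues and works round-by-round: it writes down the negative binomial law of the round durations $T_k^i$, evaluates the per-round expansion factor
\begin{equation*}
\E{\lambda_i^{2T_k^j}}=\lambda_i^{2n_j}\frac{(1-\epsilon)^{n_j}}{(1-\epsilon\lambda_i^2)^{n_j}}
\end{equation*}
via the second (negative binomial) identity of Lemma~\ref{lemma.BinomialTheorem}, lower-bounds $n_i^t$ by $kn_i$ during round $k+1$, and then shows that the whole series $\E{\sum_{t\ge 1}\lambda_i^{2t}\delta^{n_i^t}}$ is finite --- summability, rather than a direct rate estimate, is what forces the limit to zero, and the algebraic condition that the per-round factor be less than one is exactly~\eqref{eq:AdaptiveTDMASufficiency}. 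You instead exploit the pathwise observation that the total success count $N_t=\sum_{s\le t}\gamma_s$ is Binomial$(t,1-\epsilon)$ regardless of the (history-dependent) schedule, that Algorithm~1 allocates successes deterministically in quota-sized blocks so that $n_j^t\ge \ell_i\lfloor N_t/S\rfloor$ surely, and then the binomial generating function (the first identity of Lemma~\ref{lemma.BinomialTheorem}) yields the per-time-step geometric bound $\E{\delta^{n_j^t}}\le\delta^{-\ell_i}\bigl(\epsilon+(1-\epsilon)\delta^{\ell_i/S}\bigr)^t$; the same inequality~\eqref{eq:AdaptiveTDMASufficiency} then makes $|\lambda_i|^2\bigl(\epsilon+(1-\epsilon)\delta^{\alpha_i/\mu_i}\bigr)<1$ and finishes the argument. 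Your version buys two things: an explicit geometric decay rate at every $t$ (not just summability), which lets you absorb the polynomial prefactors $t^{2(\mu_i-1)}$ and thus treat general Jordan blocks directly instead of deferring them to the cited references as the paper does; and it sidesteps the convergence requirement $\epsilon\lambda_i^2<1$ that the paper's negative binomial series needs (it holds under the hypothesis, but the paper leaves this implicit). What the paper's round-based machinery buys, in turn, is reuse: the per-round expansion factor $\E{\lambda_i^{2T_1^t}\delta^{n_i}}<1$ is precisely the quantity (cf.~\eqref{eq.desiredResult}) on which the two-dimensional optimal-scheduler analysis of Theorem~\ref{theorem:TwoDimensionalSystems} is built, whereas your per-time bound does not extend to Algorithm~2, whose quotas $n_{2,k}$ are random. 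Your rational-approximation step for turning the real weights $\alpha_i$ into integer quotas is indeed harmless, with the one bookkeeping point that the perturbed weights must still sum to one --- easily arranged since the right-hand side of~\eqref{eq:AdaptiveTDMASufficiency} is monotone in $\alpha_i$, so slack can be donated to any coordinate.
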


\begin{proof} Here we only consider the case that
    $\lambda_1,\ldots, \lambda_d$ are real and $m_i=1$. We can easily
    extend the analysis to other cases by following a similar line of
    arguments as in~\cite{KumarU2014TAC} and the Section 2.3.1.2
    in~\cite{ComoG2014Book}.

    Since the erasure process is i.i.d., $\{T_k^i\}$ is i.i.d.\ for
    all $i=1, 2,\ldots, N$ with the probability distribution
    \begin{equation}
        \label{eq.pdfT1}
        \text{Pr}(T^i_k=n_i+l)= \binom{n_i+l-1} {n_i-1}(1-\epsilon)^{n_i}\epsilon^l
    \end{equation}
    with $l=0, 1, 2,\ldots$. In light of
    Lemma~\ref{lemma.BinomialTheorem}, we have that
    \begin{align}
      \E{\lambda_i^{2T^j_k}}&=\sum_{l=0}^{\infty} \lambda_i^{2(n_j+l)}  \binom {n_j+l-1}{n_j-1}  (1-\epsilon)^{n_j}\epsilon^l \nonumber \\
                            &=\lambda_i^{2n_j} \frac{(1-\epsilon)^{n_j}}{(1-\epsilon\lambda_i^2)^{n_j}} \label{eq.expectationExpTj}
    \end{align}
    Since $T_k^j$ is independent with $T_k^i$ for all
    $i, j \in \{ 1, 2, \ldots, N\}$, we have
    \begin{align}
      &\E {\lambda_i^{2( \sum_{j=1}^NT_k^j )} \delta^{n_i}}=\prod_{j=1}^N \E{\lambda_i^{2T_k^j}} \delta^{n_i} \nonumber \\
      &=\left( \lambda_i^{2}
        \frac{(1-\epsilon)}{(1-\epsilon\lambda_i^2)}\delta^{\frac{n_i}{\sum_{j=1}^N n_j}}\right)^{\sum_{j=1}^N n_j} \label{eq.expontationExpTtDelta}
    \end{align}
    Besides, if we define $T_0^t=0$, we have
    \begin{align*}
      &\E{ \sum_{t=1}^{\infty} \lambda_i^{2t} \delta^{n_i^t} }  \le \sum_{k=0}^{\infty}\E{ \sum_{j=1}^{T_{k+1}^t-1} \lambda_i^{2(T_0^t +\ldots +T_k^t +j )}  \delta^{k n_i}}\\
      & =  \sum_{k=0}^{\infty}  \E{   \frac{\lambda_{i}^{2T_{k+1}^t}- \lambda_{i}^2}{\lambda_{i}^2-1}  } \E{ \lambda_i^{2T_1^t}  \delta^{ n_i}}^{k} \\
      & =  \sum_{k=0}^{\infty}  \E{   \frac{\lambda_{i}^{2T_{k+1}^t}- \lambda_{i}^2}{\lambda_{i}^2-1}  }  \left( \lambda_i^{2} \frac{(1-\epsilon)}{(1-\epsilon\lambda_i^2)}\delta^{\frac{n_i}{\sum_{j=1}^N n_j}}\right)^{k (\sum_{j=1}^N n_j)}
    \end{align*}
    In view of~\eqref{eq.expectationExpTj}, we know that
    $\E{ \frac{\lambda_{i}^{2T_{k+1}^t}-
        \lambda_{i}^2}{\lambda_{i}^2-1} }$
    is bounded. Moreover, if~\eqref{eq:AdaptiveTDMASufficiency} holds,
    we can always find $n_j$s such that
    \begin{equation*}
        \left( \lambda_i^{2} \frac{(1-\epsilon)}{(1-\epsilon\lambda_i^2)}\delta^{\frac{n_i}{\sum_{j=1}^N n_j}}\right)^{\sum_{j=1}^N n_j} <1
    \end{equation*}
    for all $i=1, 2,\ldots, N$, which further implies
    $\E{ \sum_{t=1}^{\infty}\lambda_i^{2t} \delta^{n_i^t}}<\infty$.
    Thus $\lim_{t\rightarrow } \E{ \lambda_i^{2t} \delta^{n_i^t}}=0$
    for all $i=1, \ldots, N$. In light of
    Lemma~\ref{lemma:estimationThenControl}, the result can be proved.
\end{proof}

\begin{remark}
    \label{remark:adaptiveLargerThanConventional}
    The sufficiency~\eqref{eq:TdmaSufficiency} achieved with the TDMA
    scheduler can be alternative formulated as if there exist
    $\alpha_i>0$ and $\sum_{i=1}^d\alpha_i=1$, such that
    \begin{equation*}
        \ln |\lambda_i| <-\frac{\alpha_i}{2\mu_i} \ln (\epsilon + (1-\epsilon) \delta)
    \end{equation*}
    for all $i=1,2,\ldots, d$, the system~\eqref{eq.LtiDynamics} can
    be mean square stabilized. Since
    \begin{equation*}
        -\frac{\alpha_i}{2\mu_i} \ln (\epsilon + (1-\epsilon) \delta)< -
        \frac{1}{2} \mathrm{ln} \left( \epsilon+(1-\epsilon)
            \delta^{\frac{\alpha_i}{\mu_i} } \right)
    \end{equation*}
    any $\lambda_i$ that satisfies~\eqref{eq:TdmaSufficiency} must
    also satisfy~\eqref{eq:AdaptiveTDMASufficiency} with the same
    $\alpha_i$, which implies that the adaptive TDMA scheduler
    achieves a larger stabilizability region than the TDMA scheduler.
\end{remark}

When all the strictly unstable eigenvalues have the same magnitude, we
can show that the sufficient condition
\eqref{eq:AdaptiveTDMASufficiency} coincides with the necessary
condition~\eqref{eq.Necessity}. The result is given in the following
corollary.
\begin{corollary}
    \label{corollary.equalMagnitude}
    If $\exists d_u\le d$, such that
    $|\lambda_1|=\ldots =|\lambda_{d_u}|=\lambda>1$ and
    $|\lambda_{d_u+1}|=\ldots = |\lambda_{d}|=1$, there exists an
    encoder/decoder pair $\{f_t\}, \{h_t\}$, such that the LTI
    dynamics~\eqref{eq.LtiDynamics} can be stabilized over the
    communication channel~\eqref{eq:channel} in mean square sense if
    and only if
    \begin{equation*}
        \ln \lambda< -\frac{1}{2} \ln \left(\epsilon+ (1-\epsilon) \delta^{\frac{1}{\mu_1+\ldots +\mu_{d_u}}}\right)
    \end{equation*}
\end{corollary}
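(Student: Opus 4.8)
The plan is to read off both directions of the equivalence directly from the tools already established: sufficiency from Theorem~1 and necessity from the necessary part of Lemma~1, each time specializing a free parameter. Throughout write $M=\mu_1+\ldots+\mu_{d_u}$ for the total multiplicity of the strictly unstable blocks, and note $M\ge 1$ because $\lambda>1$ forces $d_u\ge 1$. The whole corollary is then obtained by a clever-but-routine choice of $\alpha_i$ in \eqref{eq:AdaptiveTDMASufficiency} and of $v_i$ in \eqref{eq.Necessity}.

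For sufficiency I would apply \eqref{eq:AdaptiveTDMASufficiency} with a weight vector that concentrates almost all of the unit budget $\sum_{i=1}^d\alpha_i=1$ on the strictly unstable modes, leaving only a vanishing amount for the marginally stable ones. Concretely, for a small $\eta\in(0,1)$ set $\alpha_i=(1-\eta)\mu_i/M$ for $i\le d_u$ and split the remaining mass $\eta$ into strictly positive shares among $i=d_u+1,\ldots,d$. For each marginally stable block $\ln|\lambda_i|=0$, while the right-hand side of \eqref{eq:AdaptiveTDMASufficiency} is strictly positive for any $\alpha_i>0$ since $\delta\in(0,1)$ gives $\epsilon+(1-\epsilon)\delta^{\alpha_i/\mu_i}<1$; hence these constraints hold automatically. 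For each strictly unstable block $\alpha_i/\mu_i=(1-\eta)/M$, so the requirement collapses to $\ln\lambda<-\tfrac12\ln(\epsilon+(1-\epsilon)\delta^{(1-\eta)/M})$. Because $\delta^{(1-\eta)/M}\to\delta^{1/M}$ as $\eta\to 0^+$, continuity combined with the \emph{strict} inequality in the corollary lets me pick $\eta$ small enough that this still holds, and Theorem~1 then delivers mean square stabilizability.

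For necessity I would specialize \eqref{eq.Necessity} to a single well-chosen index vector. Take $v_i=m_i$ for the strictly unstable indices $i\le d_u$ and $v_i=0$ for $i>d_u$; then $a_iv_i=a_im_i=\mu_i$ on the unstable blocks and $0$ otherwise, so $v=\sum_{i\le d_u}\mu_i=M$. Since $|\lambda_i|=\lambda$ for all $i\le d_u$, the left-hand side of \eqref{eq.Necessity} reduces to $M\ln\lambda$ and the inequality becomes $M\ln\lambda<-\tfrac{M}{2}\ln(\epsilon+(1-\epsilon)\delta^{1/M})$; dividing by $M>0$ reproduces exactly the stated bound. As stabilizability forces \eqref{eq.Necessity} for every admissible tuple $(v_1,\ldots,v_d)$, it forces this particular instance, which is precisely the claimed necessary condition.

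The algebra of the two specializations is immediate; the one point deserving care is the limiting step in sufficiency. The corollary's threshold is exactly the $\eta\to 0$ boundary of the region achievable by Theorem~1, so it cannot be met by any single feasible $\alpha$ but only approached from inside; the strictness of the hypothesis is what makes the continuity argument close. As in the proof of Theorem~1, the reduction of the general complex/repeated-eigenvalue case to the real simple case follows the same line of reasoning and introduces no new difficulty.
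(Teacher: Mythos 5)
Your proposal is correct and follows exactly the route the paper intends (the paper states the corollary without a written proof, asserting only that the sufficient condition \eqref{eq:AdaptiveTDMASufficiency} of Theorem~1 and the necessary condition \eqref{eq.Necessity} of Lemma~1 coincide in this case): sufficiency by choosing the weights $\alpha_i$ proportional to $\mu_i$ on the strictly unstable blocks, and necessity by specializing \eqref{eq.Necessity} to $v_i=m_i$ for $i\le d_u$ and $v_i=0$ otherwise. Your $\eta$-perturbation handling of the marginally stable modes (needed because Theorem~1 requires $\alpha_i>0$ for every block, with the trivial case $d_u=d$ taken as $\eta=0$) is a detail the paper leaves implicit, and your continuity argument exploiting the strict inequality closes it correctly.
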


When the strictly unstable eigenvalues are with distinct magnitudes,
generally there exists a gap between the necessary stabilizability
condition~\eqref{eq.Necessity} and the sufficient stabilizability
condition~\eqref{eq:AdaptiveTDMASufficiency} that can be achieved by
the adaptive TDMA scheduler. In the following, we propose an optimal
scheduler design for two-dimensional systems, specifically with
distinct magnitudes, that can stabilize all the eigenvalue pairs in
the necessary stabilizability region.

\section{Optimal Scheduler for Two-Dimensional Systems}
Since when the eigenvalues are with equal magnitudes, the adaptive
TDMA scheduler is optimal. Without loss of generality, in this section
we assume that
$A= \left[ \begin{smallmatrix} \lambda_1 & 0\\ 0 &
        \lambda_2 \end{smallmatrix} \right]$
with $\lambda_1, \lambda_2\in \mathbb{R}$ and
$|\lambda_1| >|\lambda_2|>1$ and propose an optimal scheduler design
for such systems. In view of Lemma~\ref{lemma:estimationThenControl}
and the encoder and decoder~\eqref{eq:encoder}~\eqref{eq:decoder}, we
should design schedulers to ensure that under stochastic packet
dropouts
\begin{equation*}
    \lim_{t\rightarrow \infty} \E{\lambda_1^{2t}\delta^{n_1^t}}=0,\;\; \lim_{t\rightarrow \infty} \E{\lambda_2^{2t}\delta^{n_2^t}}=0
\end{equation*}
or equivalently
\begin{equation}
    \label{eq:NonOracleobjective}
    \lim_{t\rightarrow \infty} \E{\lambda_1^{2t}\delta^{n_1^t}+ \lambda_2^{2t}\delta^{n_2^t}}=0
\end{equation}
A critical condition to ensure~\eqref{eq:NonOracleobjective} is that
the minimal value of
$\E{\lambda_1^{2t}\delta^{n_1^t} +\lambda_2^{2t}\delta^{n_2^t}}$
converges to zero asymptotically. Thus the scheduler should be
designed to optimally allocate $n_1^t$ and $n_2^t$ to minimize
$ \lambda_1^{2t}\delta^{n_1^t}+\lambda_2^{2t}\delta^{n_2^t}$. The
optimal allocation of $n_1^t$ and $n_2^t$ should satisfy that
\begin{equation}
    \label{eq:goal}
    n_2^t=n_1^t+2t \frac{\ln |\lambda_1|-\ln |\lambda_2|}{\ln \delta}
\end{equation}
which is obtained by requiring
$ \lambda_1^{2t}\delta^{n_1^t}=\lambda_2^{2t}\delta^{n_2^t}$. In the
following, we propose a scheduler design which enforces $n_1^t$ and
$n_2^t$ to satisfy~\eqref{eq:goal} when $t$ is sufficiently large in
the presence of stochastic packet dropouts. Then we may expect that
the scheduler is optimal.

\subsection{Optimal Scheduler Design}

\begin{center}
    \begin{tabular}{ p{0.45\textwidth} }
      \hline
      \textbf{Algorithm 2}: Optimal Scheduler for Two-dimensional Systems\\
      \hline
      \begin{itemize}
        \item In the $k$-th round, the first encoder/decoder pair is
          scheduled to use the channel until the transmissions succeed
          for $n_1$ times. Denote the time period to achieve this
          object as $T_k^1$.
        \item \begin{itemize}
            \item If
              \begin{equation}
                  \label{eq.swithchingCondition}
                  n_1+2T_k^1 \frac{\ln |\lambda_1|-\ln |\lambda_2|}{\ln \delta}>0
              \end{equation}
              the second encoder/decoder pair is scheduled to use the
              channel until the transmissions succeed for $n_{2,k}$
              times with
              \begin{equation}
                  \label{eq:stoppingcondition}
                  n_{2, k}>n_1+2(T_k^1+T_k^2)\frac{\ln|\lambda_1|-\ln |\lambda_2|}{\ln \delta}
              \end{equation}
              where $T_k^2$ denotes the time period of achieving this
              object.
            \item Otherwise, set $T_k^2=0$ and do not conduct any
              transmissions.
          \end{itemize}
        \item Repeat.
      \end{itemize}\\
      \hline
    \end{tabular}
\end{center}

Thus $T^1_k$ has the probability distribution~\eqref{eq.pdfT1} with
$i=1$. Let $T_k^t$ denote the total time used to complete the $k$-th
round transmission, i.e., $T_k^t=T_k^1+T_k^2$. It is clear that
$T_i^t$ is independent with $T_j^t$ and $n_{2,i}$ is independent with
$n_{2,j}$ for any $i,j$. The switching
condition~\eqref{eq.swithchingCondition} implies that if
\begin{equation*}
    T_k^1\le T^c := \frac{n_1 \ln \delta}{2\left( \ln |\lambda_2|
          -\ln |\lambda_1| \right)}
\end{equation*}
after finishing transmitting the estimate corresponding to $x_{1,0}$,
the estimate corresponding to $x_{2,0}$ can be transmitted. Otherwise,
the algorithm continues to use the channel to transmit the estimate
corresponding to $x_{1,0}$. Besides, it is clear that $T_k^2$ is a
stopping time when $T_k^1\le T^c$. Moreover $T_k^2$ is bounded when
$T_k^1\le T^c$ due to the fact that $|\lambda_2| < |\lambda_1|$.
Hence, even if all transmissions fail, we still have
$T_k^1+T_k^2 \leq T^c$, which means $T_k^2$ is bounded.

\subsection{Stability Results}
The result is stated in the following theorem.
\begin{theorem}
    \label{theorem:TwoDimensionalSystems}
    Suppose
    $A= \left[ \begin{smallmatrix} \lambda_1 & 0\\ 0 &
            \lambda_2 \end{smallmatrix} \right]$
    with $\lambda_1, \lambda_2\in \mathbb{R}$ and
    $|\lambda_1| >|\lambda_2|>1$, the LTI
    dynamics~\eqref{eq.LtiDynamics} is mean square stabilizable over
    the power constrained lossy channel~\eqref{eq:channel} if and only
    if
    \begin{gather}
        \ln |\lambda_1|< -\frac{1}{2}\ln \left((1-\epsilon)\delta+\epsilon \right)     \label{eq:iffOracle1}\\
        \ln |\lambda_1|+ \ln |\lambda_2| < -\ln
        \left((1-\epsilon)\sqrt{\delta}+\epsilon
        \right)\label{eq:iffOracle2}
    \end{gather}
\end{theorem}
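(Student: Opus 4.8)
The plan is to read necessity off Lemma~1 and to prove sufficiency by analysing Algorithm~2 through Lemma~\ref{lemma:estimationThenControl}. For necessity I would specialise~\eqref{eq.Necessity} to $d=2$, $a_1=a_2=1$, $m_1=m_2=1$, so that each $v_i\in\{0,1\}$. The choice $(v_1,v_2)=(1,0)$ reproduces~\eqref{eq:iffOracle1}, the choice $(1,1)$ (so $v=2$) reproduces~\eqref{eq:iffOracle2}, and $(0,1)$ gives $\ln|\lambda_2|<-\tfrac12\ln((1-\epsilon)\delta+\epsilon)$, which is implied by~\eqref{eq:iffOracle1} since $|\lambda_1|>|\lambda_2|$. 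Hence~\eqref{eq:iffOracle1}--\eqref{eq:iffOracle2} are exactly the two-dimensional necessary conditions.

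For sufficiency, by Lemma~\ref{lemma:estimationThenControl} with the encoder/decoder~\eqref{eq:encoder}--\eqref{eq:decoder} it suffices to show $\E{\sum_{t\ge1}(\lambda_1^{2t}\delta^{n_1^t}+\lambda_2^{2t}\delta^{n_2^t})}<\infty$, which forces~\eqref{eq:NonOracleobjective}. Since the erasures are i.i.d.\ and Algorithm~2 restarts each round, the triples $(T_k^1,T_k^2,n_{2,k})$ are i.i.d.; writing $S_k=\sum_{j=1}^kT_j^t$, I would split the sum round by round as in the proof of Theorem~1 above. Within round $k+1$ one has $n_1^t\ge kn_1$ and $\lambda_1^{2t}\le\lambda_1^{2S_{k+1}}$, so its dim-$1$ contribution is at most $T_{k+1}^t\lambda_1^{2S_k}\lambda_1^{2T_{k+1}^t}\delta^{kn_1}$; taking expectations with independence reduces the dim-$1$ series to $\E{T^t\lambda_1^{2T^t}}\sum_kM_1^k$ with the single per-round multiplier $M_1:=\delta^{n_1}\E{\lambda_1^{2T^t}}$. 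Finiteness of $\E{T^t\lambda_1^{2T^t}}$ needs only $\epsilon\lambda_1^2<1$, which follows from~\eqref{eq:iffOracle1}, so the dim-$1$ part reduces to proving $M_1<1$.

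The dim-$2$ terms I would not control by a separate multiplier but by the scheduler's balancing. In every round $n_{2,k}\ge n_1+2T_k^t\tfrac{\ln|\lambda_1|-\ln|\lambda_2|}{\ln\delta}$: in the transmitting branch this is the stopping rule~\eqref{eq:stoppingcondition}, and in the idle branch $n_{2,k}=0$ while the negation of~\eqref{eq.swithchingCondition} gives the same inequality. Summing yields $n_2^{S_k}\ge n_1^{S_k}+2S_k\tfrac{\ln|\lambda_1|-\ln|\lambda_2|}{\ln\delta}$, and together with the identity $\lambda_2^{2}\delta^{2(\ln|\lambda_1|-\ln|\lambda_2|)/\ln\delta}=\lambda_1^{2}$ this gives $\lambda_2^{2S_k}\delta^{n_2^{S_k}}\le\lambda_1^{2S_k}\delta^{n_1^{S_k}}$ at every round boundary. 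Propagating this across a round bounds the dim-$2$ series by $\E{T^t\lambda_2^{2T^t}}\sum_kM_1^k$ (the prefactor being finite because $|\lambda_2|<|\lambda_1|$), so the whole theorem collapses to choosing $n_1$ with $M_1<1$.

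Evaluating $M_1$ is the step I expect to be hardest, because $T^t=T^1+T^2$ and $T^2$ is a stopping time coupled both to the erasures and to the random threshold $n_{2,k}$. I would split along the two branches. On $\{T^1>T^c\}$, $T^t=T^1$ is negative-binomial and~\eqref{eq.expectationExpTj} gives a contribution at most $(\delta\lambda_1^2(1-\epsilon)/(1-\epsilon\lambda_1^2))^{n_1}$, which tends to $0$ precisely under~\eqref{eq:iffOracle1}. On $\{T^1\le T^c\}$, $T^t$ is bounded, and this is where~\eqref{eq:iffOracle2} must surface. Writing $n_{2,k}=n_1+2T^t\tfrac{\ln|\lambda_1|-\ln|\lambda_2|}{\ln\delta}+\xi$ with a bounded overshoot $\xi>0$ and using the same identity, I would rewrite the dim-$1$ round factor as $\lambda_1^{2T^t}\delta^{n_1}=\delta^{-\xi/2}\prod_{l=1}^{T^t}X_l$, where $X_l=|\lambda_1\lambda_2|\,\delta^{\sigma_l/2}$ and $\sigma_l$ flags a success in slot $l$, so that $\E{X_l}=|\lambda_1\lambda_2|((1-\epsilon)\sqrt\delta+\epsilon)=:\rho$. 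This is exactly where the $\sqrt\delta$ of~\eqref{eq:iffOracle2} appears: weighting the \emph{total} successes of both sub-dynamics by $\tfrac12$ replaces $\delta$ by $\sqrt\delta$. Under~\eqref{eq:iffOracle2} one has $\rho<1$, so since $T^t\ge n_1$ I would bound $\prod_{l=1}^{T^t}X_l\le\rho^{n_1}\prod_{l=1}^{T^t}(X_l/\rho)$ and apply optional stopping to the nonnegative martingale $\prod_{l=1}^m(X_l/\rho)$ at the bounded time $T^t\wedge(\lceil T^c\rceil+1)$ to obtain $\E{\prod_{l=1}^{T^t}(X_l/\rho)\,\mathbf{1}_{\{T^1\le T^c\}}}\le1$, hence a transmitting-branch contribution $O(\rho^{n_1})\to0$. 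Combining the two branches gives $M_1\to0$ as $n_1\to\infty$, so $M_1<1$ for $n_1$ large, which finishes sufficiency. The genuinely delicate point is the coupling in the transmitting branch, resolved by the balancing identity together with optional stopping on this combined multiplicative process.
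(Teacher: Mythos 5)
Your proof is correct, and although its outer skeleton coincides with the paper's (necessity by specializing~\eqref{eq.Necessity} to $d=2$; sufficiency via Lemma~\ref{lemma:estimationThenControl}, a round-by-round decomposition of $\E{\sum_t (\lambda_1^{2t}\delta^{n_1^t}+\lambda_2^{2t}\delta^{n_2^t})}$, and a per-round contraction proved by optional stopping), the contraction itself---what the paper isolates as Lemma~\ref{lemma:OneTransmissionPeriod} together with Lemma~\ref{lemma:ThetaScope}---is handled by a genuinely different route, in two respects. First, the paper proves the two inequalities of~\eqref{eq.desiredResult} separately and needs the ad hoc computation $\Omega<0$ to control the $\lambda_2^{2T_1^1}$ term on the idle branch $\{T_1^1>T^c\}$; you instead observe that the scheduler enforces $n_{2,k}\ge n_1+T_k^t\phi$ with $\phi=2(\ln|\lambda_1|-\ln|\lambda_2|)/\ln\delta$ on \emph{both} branches (stopping rule~\eqref{eq:stoppingcondition} on one, the negation of~\eqref{eq.swithchingCondition} on the other), which gives the pointwise domination $\lambda_2^{2T_1^t}\delta^{n_{2,1}}\le\lambda_1^{2T_1^t}\delta^{n_1}$; the second inequality of~\eqref{eq.desiredResult} then becomes a corollary of the first, and the $\Omega$ argument disappears. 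Second, for the remaining estimate $\E{\lambda_1^{2T_1^t}\delta^{n_1}}<1$, the paper tilts the erasure process by the $\theta$ solving the transcendental equation~\eqref{eq.equationOfTheta} (this is the entire purpose of Lemma~\ref{lemma:ThetaScope}) and computes in closed form, arriving at the factor $\left(\delta e^{-2\theta}\right)^{n_1}$; your process $\prod_l (X_l/\rho)$ with $X_l=|\lambda_1\lambda_2|\delta^{\gamma_l/2}$ is the same exponential-martingale family $e^{\theta S+b t}$ frozen at the endpoint tilt $\theta=\frac12\ln\delta$, run over the whole round rather than only the second phase, and the sharpness lost by not solving the equation is recovered with the crude bound $\rho^{T_1^t}\le\rho^{n_1}$ (valid since $T_1^t\ge n_1$ and $\rho<1$) plus the uniform overshoot bound $\xi\le 1+|\phi|$. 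The trade-off is clear: the paper's exact tilt yields a tighter per-round constant at the cost of an existence lemma and extra algebra, while your endpoint tilt makes~\eqref{eq:iffOracle2} surface verbatim as $\rho=|\lambda_1\lambda_2|\left((1-\epsilon)\sqrt{\delta}+\epsilon\right)<1$ and eliminates Lemma~\ref{lemma:ThetaScope} and the $\Omega$ step entirely. The supporting details you cite all check out: \eqref{eq:iffOracle1} implies $\epsilon\lambda_1^2<1$, hence $\E{T_1^t\lambda_1^{2T_1^t}}<\infty$; on $\{T_1^1\le T^c\}$ one has $T_1^t\le T^c+1$, so your stopping time is indeed bounded and optional stopping is legitimate; and your idle-branch bound is exactly the paper's negative-binomial estimate~\eqref{eq.expectationExpTj}.
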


The following lemma is important in the proof of
Theorem~\ref{theorem:TwoDimensionalSystems}, which is stated first and
its proof can be found in the appendix.
\begin{lemma}~\label{lemma:OneTransmissionPeriod}
    If~\eqref{eq:iffOracle1} and~\eqref{eq:iffOracle2} are satisfied,
    with the scheduling Algorithm 2, we have that
    \begin{equation}
        \label{eq.desiredResult}
        \mathbb{E}\{\lambda_1^{2T_1^t}\delta^{n_1}\} <1, \quad  \E{ \lambda_2^{2T_1^t}\delta^{n_{2,1}}}<1
    \end{equation}
\end{lemma}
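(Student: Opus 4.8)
The plan is to work in the real, simple-eigenvalue setting of the statement and to read the two target quantities in \eqref{eq.desiredResult} as the per-round growth factors of the two modal errors. Writing $X_t=\lambda_1^{2t}\delta^{n_1^t}$ and $Y_t=\lambda_2^{2t}\delta^{n_2^t}$ for the cumulative modal quantities, the two inequalities are exactly $\E{X_{T_1^t}}<1$ and $\E{Y_{T_1^t}}<1$, since over one round pair $1$ collects precisely $n_1$ successes and pair $2$ collects $n_{2,1}$ successes while the clock advances by $T_1^t$. First I would record the elementary consequences of Algorithm 2: on the non-switching event $T_1^1>T^c$ one has $T_1^2=0$ and $n_{2,1}=0$, whereas on the switching event $T_1^1\le T^c$ the stopping rule forces $T_1^1+T_1^2\le T^c$, so $T_1^t$ is almost surely finite and bounded on that event.

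The main device is the geometric mean $Z_t=\sqrt{X_tY_t}=(\lambda_1\lambda_2)^t\delta^{(n_1^t+n_2^t)/2}$. Whichever encoder/decoder pair the scheduler activates in a given slot, exactly one of $n_1^t,n_2^t$ can change, and it increases by one precisely when that slot's packet is received; hence the one-step conditional multiplier of $Z_t$ is $\lambda_1\lambda_2\left((1-\epsilon)\sqrt{\delta}+\epsilon\right)=:\rho$ in every slot, independently of the schedule. Condition \eqref{eq:iffOracle2} says exactly $\rho<1$, so $W_t:=\rho^{-t}Z_t$ is a nonnegative martingale with $W_0=1$. Applying optional stopping to the bounded times $T_1^t\wedge n$ and letting $n\to\infty$ (Fatou) gives $\E{W_{T_1^t}}\le 1$, and since $T_1^t\ge 1$ this yields $\E{Z_{T_1^t}}\le\rho<1$.

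Next I would dispatch the second inequality. Rearranging the stopping rule \eqref{eq:stoppingcondition} shows that on the switching event the condition $n_{2,1}>n_1+2T_1^t\tfrac{\ln|\lambda_1|-\ln|\lambda_2|}{\ln\delta}$ is equivalent to $\lambda_2^{2T_1^t}\delta^{n_{2,1}}<\lambda_1^{2T_1^t}\delta^{n_1}$, i.e.\ $Y_{T_1^t}<X_{T_1^t}$; on the non-switching event $n_{2,1}=0$, and the definition of $T^c$ makes $\ln X_t-\ln Y_t$ vanish at $t=T^c$ and increase thereafter, so again $Y_{T_1^t}\le X_{T_1^t}$. Since $Y\le X$ forces $Y\le\sqrt{XY}=Z$, the second claim follows immediately from the martingale bound: $\E{\lambda_2^{2T_1^t}\delta^{n_{2,1}}}=\E{Y_{T_1^t}}\le\E{Z_{T_1^t}}\le\rho<1$.

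The first inequality is where the real work lies, and this is the step I expect to be the main obstacle. The difficulty is that $X$ contracts only while pair $1$ transmits—there \eqref{eq:iffOracle1} gives $\E{\lambda_1^{2T_1^1}\delta^{n_1}}=\big(\lambda_1^2(1-\epsilon)\delta/(1-\epsilon\lambda_1^2)\big)^{n_1}<1$—whereas during the pair-$2$ phase $X$ grows by the uncompensated factor $\lambda_1^{2T_1^2}$, so the geometric-mean martingale controls $\sqrt{XY}$ but not $X=\max(X,Y)$ itself. My plan is to condition on $T_1^1$, treat the non-switching event directly via \eqref{eq:iffOracle1}, and on the switching event control the extra growth through the balance built into the stopping rule: because the integer threshold is overshot by at most one step, the ratio $X_{T_1^t}/Y_{T_1^t}$ stays below a fixed constant, giving $X_{T_1^t}\le C\,Z_{T_1^t}$ pointwise, which with $\E{Z_{T_1^t}}\le\rho$ yields the bound. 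Making that constant sharp enough that the product is genuinely below one—i.e.\ reconciling the discrete overshoot of the stopping threshold with the clean continuous balance $\lambda_1^{2t}\delta^{n_1}=\lambda_2^{2t}\delta^{n_2}$—is the delicate point, and is presumably where the appendix does its careful bookkeeping.
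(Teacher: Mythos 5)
Your treatment of the \emph{second} inequality in \eqref{eq.desiredResult} is correct and follows a genuinely different route from the paper. The paper tilts with the exponential martingale $\exp{\theta S_t + b t}$, where $\theta$ is the root of the transcendental equation \eqref{eq.equationOfTheta} from Lemma~\ref{lemma:ThetaScope}; your geometric-mean process $Z_t=\sqrt{X_tY_t}$ is the same construction with the boundary tilt $\theta=\tfrac12\ln\delta$, run over the whole round rather than only over pair $2$'s phase, for which the compensator is exactly $\rho=|\lambda_1\lambda_2|\left((1-\epsilon)\sqrt{\delta}+\epsilon\right)$ and \eqref{eq:iffOracle2} reads $\rho<1$. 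Your observations that the stopping rule \eqref{eq:stoppingcondition} (and, on the non-switching event, the definition of $T^c$) force $Y_{T_1^t}\le X_{T_1^t}$ almost surely, hence $Y_{T_1^t}\le Z_{T_1^t}$, are correct and dispose of $\E{\lambda_2^{2T_1^t}\delta^{n_{2,1}}}<1$ without ever invoking Lemma~\ref{lemma:ThetaScope}.

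The \emph{first} inequality, however, is left with a genuine gap, and your sketched plan fails as written. The pointwise bound $X_{T_1^t}\le C\,Z_{T_1^t}$ on the switching event is fine: the overshoot $c=n_{2,1}-n_1-T_1^t\phi$ lies in $(0,\,1-\phi]$, so $X_{T_1^t}/Y_{T_1^t}=\delta^{-c}\le \delta^{\phi-1}$ and $C=\delta^{(\phi-1)/2}$. But $C>1$ is a \emph{fixed} constant determined by $\lambda_1,\lambda_2,\delta$, while your martingale estimate only gives $\E{Z_{T_1^t}}\le\rho$; nothing forces $C\rho<1$, and when $\rho$ is close to $1$ it is false. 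The idea you are missing is that the optional-stopping bound is geometric in the round length: pair $1$ alone needs $n_1$ successes, so $T_1^t\ge T_1^1\ge n_1$ deterministically, whence $\E{Z_{T_1^t}}=\E{\rho^{T_1^t}W_{T_1^t}}\le \rho^{n_1}\E{W_{T_1^t}}\le\rho^{n_1}$ --- and $n_1$ is a design parameter of Algorithm 2 that the prover is free to choose. With this refinement, $\E{X_{T_1^t}\mathbf{1}_{\{T_1^1\le T^c\}}}\le C\rho^{n_1}$, while on the non-switching event \eqref{eq:iffOracle1} gives $\E{X_{T_1^t}\mathbf{1}_{\{T_1^1> T^c\}}}\le\E{\lambda_1^{2T_1^1}\delta^{n_1}}=\left(\lambda_1^2\delta(1-\epsilon)/(1-\lambda_1^2\epsilon)\right)^{n_1}$; both terms vanish as $n_1\to\infty$, so a sufficiently large $n_1$ yields $\E{\lambda_1^{2T_1^t}\delta^{n_1}}<1$. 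This "two geometric terms, then choose $n_1$" structure is exactly how the paper concludes; the paper instead obtains the switching-event term by tilting with the $\theta$ of Lemma~\ref{lemma:ThetaScope}, so that the stopped martingale yields $\E{\lambda_1^{2T_1^2}\mid T_1^1\le T^c}$ in closed form, and it handles the non-switching event by showing the $\lambda_2$ term is dominated there ($\Omega<0$). Once the $T_1^t\ge n_1$ observation is added, your route closes and is arguably lighter than the paper's; without it, the harder half of \eqref{eq.desiredResult} remains unproven, as you yourself flagged.
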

\begin{remark}
    Intuitively, Lemma~\ref{lemma:OneTransmissionPeriod} implies that
    with the designed scheduling Algorithm 2, the average expanding
    factor corresponding to the eigenvalues $\lambda_1$ and
    $ \lambda_2$ during one round transmission is smaller than one. In
    the proof of Theorem~\ref{theorem:TwoDimensionalSystems}, we will
    show that~\eqref{eq.desiredResult} is sufficient to ensure mean
    square stability.
\end{remark}
\textit{Proof of Theorem~\ref{theorem:TwoDimensionalSystems}:} Here
only the sufficiency is proved. The necessity follows directly
from~\eqref{eq.Necessity}. Define $T_0^t=0$, we have
\begin{align*}
  &  \E{ \sum_{t=1}^{\infty}(  \lambda_1^{2t}{\delta^{n_1^t}}+  \lambda_2^{2t}{\delta^{n_2^t}})}  \\
  & =\sum_{k=1}^{\infty} \E{ \sum_{j=1}^{T_{k+1}^t-1} (\lambda_1^{T_0^t+ \ldots + T_k^t+j}{\delta^{n_1^t}}+  \lambda_2^{T_0^t+ \ldots + T_k^t+j}{\delta^{n_2^t}})}  \\
  & \le \sum_{k=1}^{\infty} \E{ \sum_{j=1}^{T_{k+1}^t-1} (\lambda_1^{T_0^t+ \ldots + T_k^t+j}{\delta^{k n_1}}+  \lambda_2^{T_0^t+ \ldots + T_k^t+j}{\delta^{n_2^t}})}
\end{align*}

Since
\begin{align}
  & \sum_{k=1}^{\infty} \E{ \sum_{j=1}^{T_{k+1}^t-1} \lambda_1^{T_0^t+ \cdots + T_k^t+j}{\delta^{k n_1}  }} \nonumber \\
  & = \sum_{k=1}^{\infty}  \E{ \lambda_1^{T_0^t+\cdots+ T_k^t} \delta^{k n_1} \sum_{j=1}^{T_{k+1}^t-1}  } \nonumber\\
  &= \sum_{k=1}^{\infty} \E{ \frac{\lambda_1^{T_{k+1}^t}-\lambda_1^2}{\lambda_1^2-1} } \E{\lambda_1^{T_1^t} \delta^{n_1} }^k \label{eq:part1}
\end{align}
and
\begin{align}
  & \sum_{k=0}^{\infty} \E{ \sum_{j=1}^{T_{k+1}^t }  \lambda_2^{T_0^t+ \ldots + T_k^t+j}{\delta^{n_2^t}} } \nonumber \\
  & \le \sum_{k=0}^{\infty} \E{ \sum_{j=1}^{T_{k+1}^t }  \lambda_2^{T_0^t+ \ldots + T_k^t+j}{\delta^{n_{2,1}+\ldots + n_{2,k}}} } \nonumber \\
  &= \sum_{k=0}^{\infty} \E{ \lambda_2^{T_1^t } \delta^{n_{2, 1}} }^k   \E{ \frac{\lambda_2^{T_{k+1}^t}-\lambda_2^2}{\lambda_2^2-1} } \label{eq:part2}
\end{align}

In view of~\eqref{eq.desiredResult}, we know that~\eqref{eq:part1}
and~\eqref{eq:part2} are bounded. Thus
$\E{ \sum_{t=1}^{\infty}( \lambda_1^{2t}{\delta^{n_1^t}}+
  \lambda_2^{2t}{\delta^{n_2^t}})}$
is bounded, which further implies that
$ \lim_{t \rightarrow
  \infty}\E{\lambda_1^{2t}\delta^{n_1^t}+\lambda_2^{2t}\delta^{n_2^t}}
=0$. The proof of the sufficiency is complete. \hfill $\blacksquare$

\begin{remark}
    For $N$-dimensional systems, generally we want to minimize
    $\sum_{i=1}^N \lambda_i^{2t} \delta^{n_i^t}$ subject to the
    constraint that $\sum_{i=1}^{N}n_i^t=n$ with $n$ being the total
    number of successful transmissions by time $t$. The optimal choice
    of $n_i^t$ should be
    \begin{equation}
        \label{eq.goalForNStates}
        n_i^{t*}=\frac{1}{N} \left(  n+2t \frac{\sum_{i=1}^N \ln |\lambda_i|}{\ln \delta} \right) -2t \frac{\ln |\lambda_i|}{\ln \delta}
    \end{equation}
    However $n_i^{t*}$ is determined by $n$, which is not causally
    available when transmitting $x_{i,0}$ at any time $k<t$. When
    $N=2 $, we can achieve the desired optimal allocation by fixing
    $n_1^t=n_1$ and requiring $n_2^t$ to
    achieve~\eqref{eq:stoppingcondition}. However, this method is not
    applicable to the case of $N\ge 3$.
\end{remark}

\subsection{ An Example}
Suppose the parameters in the communication channel~\eqref{eq:channel}
are $P=1$, $\sigma_n^2=1$, $\epsilon=0.7$, the regions for
$(\ln{|\lambda_1|}, \ln{|\lambda_2|})$ indicated by the
necessity~\eqref{eq.Necessity}, the
sufficiency~\eqref{eq:TdmaSufficiency} with the TDMA scheduler, the
sufficiency~\eqref{eq:AdaptiveTDMASufficiency} with the adaptive TDMA
scheduler and the
sufficiency~\eqref{eq:iffOracle1}~\eqref{eq:iffOracle2} with the
optimal scheduler are plotted in Fig.~\ref{Fig.combine}.
\begin{figure}
    \centering
    \includegraphics[width=0.4\textwidth]{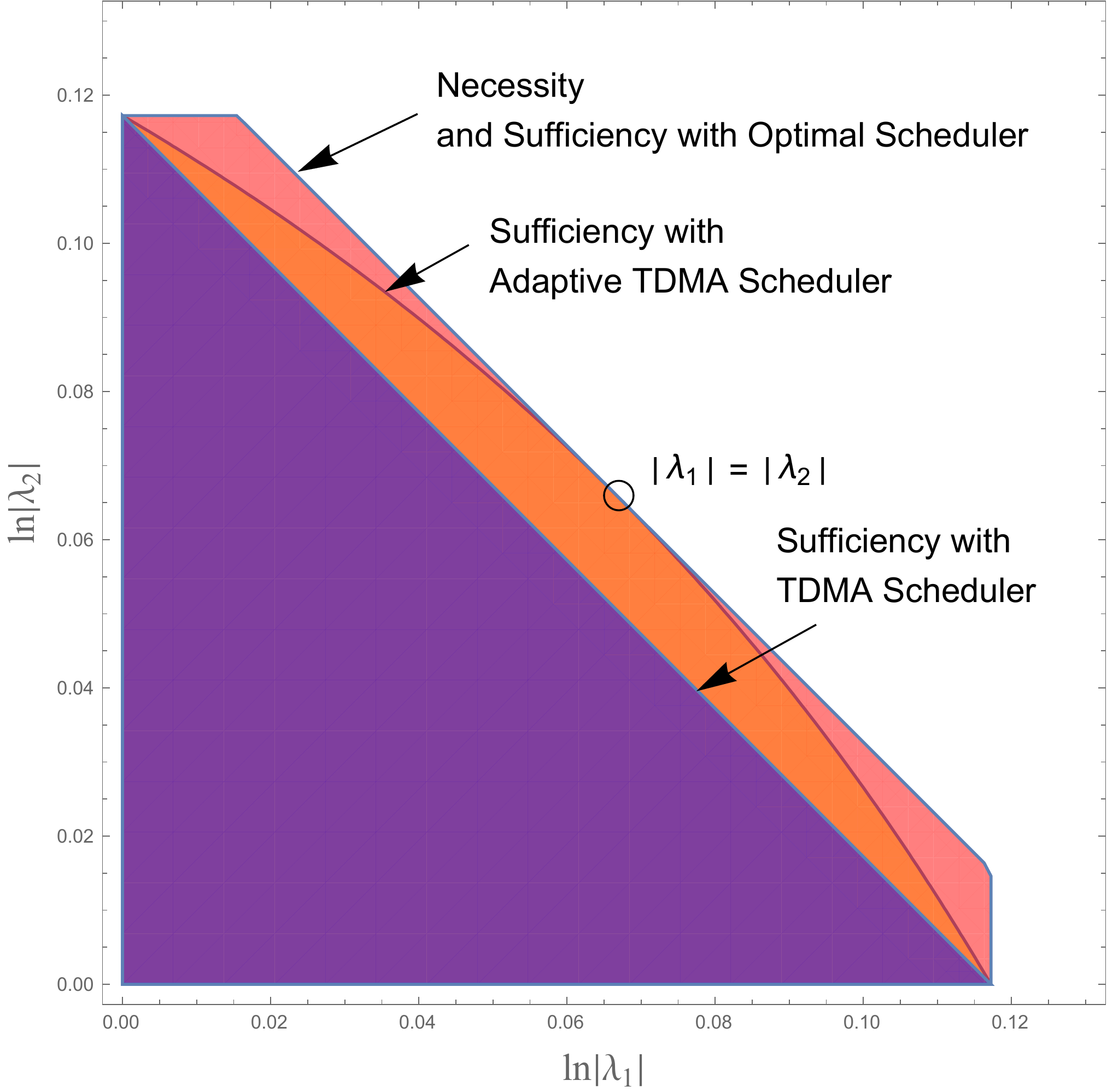}\\
    \caption{Comparisons of Stabilizability
      Conditions}\label{Fig.combine}
\end{figure}
It is clear from the figure that the optimal scheduler proposed in
Algorithm 2 covers the whole necessary stabilizability region, which
is larger than the regions that can be achieved by the adaptive and
conventional TDMA schedulers. Besides, as noted in
Remark~\ref{remark:adaptiveLargerThanConventional}, the adaptive TDMA
scheduler achieves a larger stabilizability region than that the
conventional TDMA scheduler. Moreover, we can observe that the
adaptive TDMA scheduler is optimal at three points, i.e.,
$|\lambda_1|=|\lambda_2|$, $|\lambda_1|=1$ and $|\lambda_2|=1$. This
is consistent with Corollary~\ref{corollary.equalMagnitude}.

\section{Conclusions}
This paper studies the mean square stabilizability problem of vector
LTI systems over power constrained lossy channels. Two transmission
schedulers are proposed and their stabilizability regions are
analyzed. It is shown that the proposed schedulers achieve larger
stabilizability regions than the one proposed in our previous work.
Further work will be devoted to the study of the optimal transmission
protocol for high-dimensional systems, and also for the case of
general power constrained fading channels.

\section*{Appendix}

Before stepping into the proof of
Lemma~\ref{lemma:OneTransmissionPeriod}, the following lemma is
needed.
\begin{lemma}
    \label{lemma:ThetaScope}
    If~\eqref{eq:iffOracle2} holds, the equation
    \begin{equation}
        \label{eq.equationOfTheta}
        \theta \phi-\ln [(1-\epsilon) \exp{\theta}+\epsilon]= 2\ln |\lambda_1|
    \end{equation}
    with
    $\phi = 2(\ln|\lambda_1| - \ln| \lambda_2| )/(\ln \delta) < 0$
    admits a unique solution $\theta$ with
    $ 0 > \theta > \frac12 \ln \delta$.
\end{lemma}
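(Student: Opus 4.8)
The plan is to treat the left-hand side of~\eqref{eq.equationOfTheta} as a function of a single real variable and to reduce both claims --- uniqueness and the location of the root inside $\left(\tfrac12\ln\delta,\,0\right)$ --- to elementary monotonicity together with the intermediate value theorem. Define
\[
    g(\theta) = \theta\phi - \ln\left[(1-\epsilon)\exp{\theta}+\epsilon\right],
\]
so that the assertion is exactly that $g(\theta)=2\ln|\lambda_1|$ has a unique solution with $\tfrac12\ln\delta<\theta<0$.

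First I would establish strict monotonicity, which gives uniqueness for free. Differentiating,
\[
    g'(\theta) = \phi - \frac{(1-\epsilon)\exp{\theta}}{(1-\epsilon)\exp{\theta}+\epsilon}.
\]
The ratio on the right lies in $[0,1]$ for every $\theta$, so $g'(\theta)\le \phi<0$ by the hypothesis $\phi<0$. Hence $g$ is continuous and strictly decreasing on $\mathbb{R}$, and the problem collapses to comparing the target value $2\ln|\lambda_1|$ with the two endpoint values $g(0)$ and $g\!\left(\tfrac12\ln\delta\right)$.

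Next I would evaluate the endpoints. At $\theta=0$ one gets $g(0)=-\ln[(1-\epsilon)+\epsilon]=0$, and since $|\lambda_1|>1$ we have $2\ln|\lambda_1|>0=g(0)$; because $g$ is decreasing, this already forces any root to be negative. At the left endpoint, using $\exp{\tfrac12\ln\delta}=\sqrt{\delta}$ together with the telescoping identity $\tfrac12\phi\ln\delta=\ln|\lambda_1|-\ln|\lambda_2|$ (immediate from the definition of $\phi$, as the $\ln\delta$ cancels), the linear term collapses and
\[
    g\!\left(\tfrac12\ln\delta\right) = \ln|\lambda_1| - \ln|\lambda_2| - \ln\left[(1-\epsilon)\sqrt{\delta}+\epsilon\right].
\]
A one-line rearrangement shows that the desired inequality $g\!\left(\tfrac12\ln\delta\right)>2\ln|\lambda_1|$ is equivalent to $\ln|\lambda_1|+\ln|\lambda_2|<-\ln[(1-\epsilon)\sqrt{\delta}+\epsilon]$, which is precisely hypothesis~\eqref{eq:iffOracle2}.

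Finally, combining $g\!\left(\tfrac12\ln\delta\right)>2\ln|\lambda_1|>g(0)$ with the continuity and strict monotonicity of $g$, the intermediate value theorem produces a unique $\theta$ solving $g(\theta)=2\ln|\lambda_1|$ and satisfying $\tfrac12\ln\delta<\theta<0$. I do not expect a genuine obstacle here, since the whole argument is monotonicity plus the IVT; the one delicate point --- and the step that makes the lemma work --- is the evaluation at $\theta=\tfrac12\ln\delta$, where the interval has evidently been engineered so that $\phi$ telescopes and the $\sqrt{\delta}$ term emerges, converting the range requirement into exactly~\eqref{eq:iffOracle2}. The main thing to check carefully there is the sign bookkeeping, as $\phi$, $\ln\delta$, and $\tfrac12\ln\delta$ are all negative.
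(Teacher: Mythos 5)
Your proof is correct and follows essentially the same route as the paper's: define the (strictly decreasing) function of $\theta$, evaluate it at the endpoints $0$ and $\tfrac12\ln\delta$, observe that the endpoint value at $\tfrac12\ln\delta$ exceeding $2\ln|\lambda_1|$ is exactly condition~\eqref{eq:iffOracle2}, and conclude by the intermediate value theorem. The only difference is cosmetic (you compare $g$ to the level $2\ln|\lambda_1|$ rather than shifting by that constant, and you verify monotonicity by explicitly computing the derivative, which the paper merely asserts).
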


\begin{proof}
    Define the function
    $f(\theta)=\theta \phi-\ln [(1-\epsilon)
    \exp{\theta}+\epsilon]-2\ln |\lambda_1|$.
    Since $f$ is decreasing in $\theta$, and
    $f(0)=-2\ln |\lambda_1|<0$,
    $f(\frac{1}{2} \ln \delta) =-\ln
    |\lambda_1\lambda_2|[(1-\epsilon)\sqrt{\delta}+\epsilon]$.
    If~\eqref{eq:iffOracle2} holds, we have $f(\frac12 \ln \delta)>0$,
    which implies that~\eqref{eq.equationOfTheta} admits a unique
    solution and $ 0 > \theta > \frac12 \ln \delta $.
\end{proof}

\textit{Proof of Lemma \ref{lemma:OneTransmissionPeriod}:} In view of
the conditional expectation, at the time $t=T_1^1+T_1^2$, we have
\begin{align}
  &\mathbb{E}\{\lambda_1^{2(T_1^1+T_1^2)}\delta^{n_1^t}+\lambda_2^{2(T_1^1+T_1^2)}\delta^{n_2^t}\}\nonumber\\
  &  =\mathbb{E}\{ \mathbb{E}\{ \lambda_1^{2(T_1^1+T_1^2)}\delta^{n_1^t}+\lambda_2^{2(T_1^1+T_1^2)}\delta^{n_2^t}|T_1^1\le T^c\}\}\nonumber\\
  &  + \mathbb{E} \{ \mathbb{E}\{ \lambda_1^{2(T_1^1+T_1^2)}\delta^{n_1^t}+\lambda_2^{2(T_1^1+T_1^2)}\delta^{n_2^t}|T_1^1>
    T^c\} \} \nonumber\\
  &\overset{(a)}{\le } \mathbb{E} \{ \mathbb{E}\{ 2
    \lambda_1^{2(T_1^1+T_1^2)}\delta^{n_1}|T_1^1 \le T^c\}\}\nonumber \\
  &+\mathbb{E} \{ \mathbb{E}\{
    \lambda_1^{2T_1^1}\delta^{n_1}+\lambda_2^{2T_1^1}|T_1^1> T^c\}\} \label{eq.conditioning}
\end{align}
where $(a)$ follows from~\eqref{eq:stoppingcondition}.

Suppose $T_1^1$ is known and $T_1^1\le T^c$, with the definition of
$S_t = \sum_{i=T_1^1 + 1}^{T_1^1+t} \gamma_i$ and
$Y_t=\exp{\theta S_{t} + b t}$, we have
\begin{equation*}
    \E{Y_{t+1} |Y_t, Y_{t-1}, \ldots, Y_1} = Y_t \E{ \exp{\theta \gamma_{t+1} +b}}
\end{equation*}
Define $ b = - \ln \left[(1-\epsilon)\exp\theta + \epsilon\right]$, we
have
\begin{equation*}
    \E{ \exp{\theta \gamma_{t+1} +b}}=1
\end{equation*}
Thus the stochastic process $\{Y_t\}$ is a martingale. Since $T_1^2$
is a bounded stopping time, we can use the optional stopping
theorem~\cite{Ash2000Book} on $Y_t$, which yields
$\E{Y_{T_1^2} }= \E{Y_1}= 1$. However, by our stopping condition, we
know that
\begin{align*}
  S_{T_1^2} = n_2 = n_1 + 2(T_1^1+T_1^2)\times\frac{\ln |\lambda_1| - \ln |\lambda_2|}{\ln \delta}+c
\end{align*}
with $c \ge 0$. Therefore,
\begin{equation*}
    \E{ \exp{\theta n_1 + \theta \phi(T_1^1 + T_1^2)+\theta c + bT_1^2 }|T_1^1\le T^c }= 1
\end{equation*}
which implies that
\begin{align*}
  & \E{ \exp{(\theta\phi +b)T_1^2}|T_1^1\le T^c}  =\E{ \lambda_1^{2T_1^2}| T_1^1\le T^c} \\
  & = \exp{-\theta n_1 - \theta\phi T_1^1 -\theta c}
\end{align*}

In view of the above result and~\eqref{eq.conditioning}, we have
\begin{align}
  &\E{ \lambda_1^{2(T_1^1+T_1^2)}\delta^{n_1^t}+\lambda_2^{2(T_1^1+T_1^2)}\delta^{n_2^t}} \nonumber \\
  &\le \E{  \E{ \lambda_1^{2T_1^1}\delta^{n_1}+\lambda_2^{2T_1^1}|T_1^1> T^c}} \nonumber \\
  &+ \E{\E{ 2 \lambda_1^{2T_1^1} \exp{-\theta n_1-\theta \phi T_1^1-\theta c} \delta^{n_1}|T_1^1 \le T^c}} \nonumber \\
  &\le  \mathbb{E} \{  \E{ \lambda_1^{2T_1^1}\delta^{n_1}+\Omega|T_1^1> T^c}\}\nonumber \\
  &+ \E{ 2 \lambda_1^{2T_1^1} \exp{-\theta n_1-\theta \phi T_1^1-\theta c} \delta^{n_1}} \label{eq:eq26}
\end{align}
with
$\Omega := \lambda_2^{2T_1^1} -\delta^{n_1}2 \lambda_1^{2T_1^1}
\exp{-\theta n_1-\theta \phi T_1^1-\theta c }$.

In the following, we will show that when $T_1^1>T^c$, $\Omega <0$. We
only need to show that
$ \exp{2T_1^1\ln |\lambda_2|} <\mathrm{exp} (n_1 \ln \delta +2T_1^1
\ln |\lambda_1|+\ln 2-\theta n_1-\theta \phi T_1^1-\theta c) $
or equivalently
\begin{equation*}
    T_1^1(2\ln |\lambda_1| -\theta \phi -2\ln |\lambda_2|)>\theta
    n_1+\theta c -n_1 \ln \delta-\ln 2
\end{equation*}
If~\eqref{eq:iffOracle2} holds, in view of
Lemma~\ref{lemma:ThetaScope} we have $\theta> \ln \delta$, thus
$ 1-\frac{\theta}{\ln \delta}>0$, which means
$2(\ln |\lambda_1|-\ln |\lambda_2|) -\theta \phi >0$. Since
$T_1^1>T^c=-\frac{n_1}{\phi}$, we have
\begin{align*}
  T_1^1 (2\ln |\lambda_1| -\theta \phi -2\ln |\lambda_2|)& >  -\frac{n_1}{\phi}(2\ln |\lambda_1|-2 \ln |\lambda_2|) +\theta n_1\\
                                                         &\overset{(b)}{>} \theta n_1+ \theta c -n_1 \ln \delta-\ln 2
\end{align*}
where $(b)$ holds from the definition of $\phi$. Thus when
$T_1^1>T^c$, $\Omega <0$. From~\eqref{eq:eq26}, we have
\begin{align}
  &\E{ \lambda_1^{2(T_1^1+T_1^2)}\delta^{n_1^t}+\lambda_2^{2(T_1^1+T_1^2)}\delta^{n_2^t}} \nonumber\\
  &\le \E{ 2 \lambda_1^{2T_1^1} \exp{-\theta n_1-\theta \phi T_1^1-\theta c} \delta^{n_1}}+\E{ \lambda_1^{2T_1^1}\delta^{n_1}} \label{eq.upperBound}
\end{align}

For the first term in~\eqref{eq.upperBound}, we have
\begin{align*}
  & \E{ 2 \lambda_1^{2T_1^1} \exp{-\theta n_1-\theta \phi T_1^1-\theta c} \delta^{n_1}}\\
  &= 2\delta^{n_1}\exp{-\theta n_1 -\theta c}\times  \sum_{n_1}^{\infty} \lambda_1^{2T_1^1}\exp{- \theta\phi T_1^1} \text{Pr}(T_1^1)\\
  &= 2 \exp{-\theta c} \left(\delta \exp{-\theta}\times \frac{\lambda_1^2 \exp{-\theta\phi}(1-\epsilon)}{1-\lambda_1^2\exp{-\theta\phi}\epsilon}\right)^{n_1}
\end{align*}

In view of~\eqref{eq.equationOfTheta}, we have
\begin{equation*}
    \exp{-\theta\phi} =\frac{1}{\lambda_1^2\left[(1-\epsilon)\exp{\theta}+\epsilon\right]}
\end{equation*}
Therefore,
\begin{equation*}
    \delta \exp{-\theta}\times \frac{\lambda_1^2\exp{-\theta\phi}(1-\epsilon)}{1-\lambda_1^2\exp{-\theta\phi}\epsilon} = \delta \exp{-2\theta}
\end{equation*}

Besides for the second term in~\eqref{eq.upperBound}, we have
\begin{align*}
  \E{ \lambda_1^{2T_1^1}\delta^{n_1}} = \sum_{n_1}^{\infty}{   \lambda_1^{2T_1^1}\delta^{n_1} \text{Pr}(T_1^1)}  = \left( \frac{\lambda_1^2 \delta (1-\epsilon)}{1-\lambda_1^2\epsilon} \right)^{n_1}
\end{align*}

Thus
\begin{align*}
  &\E{ \lambda_1^{2(T_1^1+T_1^2)}\delta^{n_1^t}+\lambda_2^{2(T_1^1+T_1^2)}\delta^{n_2^t}}\\
  &\le 2 \exp{-\theta c}(\delta \exp{-2 \theta})^{n_1}+ \left(
    \frac{\lambda_1^2 \delta (1-\epsilon)}{1-\lambda_1^2\epsilon}
    \right)^{n_1}
\end{align*}
If~\eqref{eq:iffOracle1} holds, we have that
$ \frac{\lambda_1^2 \delta (1-\epsilon)}{1-\lambda_1^2\epsilon}<1$. If
\eqref{eq:iffOracle2} holds, in view of Lemma~\ref{lemma:ThetaScope},
we have that $\delta \mathrm{exp}(-2\theta)<1$. Thus by appropriately
selecting $n_1$, we can guarantee
$
\mathbb{E}\{\lambda_1^{2(T_1^1+T_1^2)}\delta^{n_1}+\lambda_2^{2(T_1^1+T_1^2)}\delta^{n_{2,1}}\}
<1 $,
which further ensures
$ \mathbb{E}\{\lambda_1^{2T_1^t}\delta^{n_1}\} <1 $ and
$\E{ \lambda_2^{2T_1^t}\delta^{n_{2,1}}}<1$. The proof is complete.
\hfill $\blacksquare$

\bibliographystyle{ieeetr}
\bibliography{references}

\end{document}